\theoremstyle{plain}
\newtheorem{theorem}{Theorem}
\newtheorem{condition}[theorem]{Condition}
\newtheorem{lemma}[theorem]{Lemma}
\newtheorem{proposition}[theorem]{Proposition}
\newtheorem{remark}[theorem]{Remark}
\numberwithin{equation}{section}
\numberwithin{theorem}{section}
\newcommand{\E}{\operatorname{E}}
\renewcommand{\mathbf}{\boldsymbol}
\newcommand{\abs}[1]{\left\vert#1\right\vert}
\newcommand{\suit}[1]{\left(#1\right)}
\newcommand{\td}{\stackrel{d}{\to}}
\begin{document}

\begin{frontmatter}
\title{All block maxima method for estimating the extreme value index}
\runtitle{All block maxima method for estimating the extreme value index}

\begin{aug}
\author[A]{\fnms{Jochem} \snm{Oorschot}\ead[label=e1, mark]{oorschot@ese.eur.nl}},
\author[B]{\fnms{Chen} \snm{Zhou}\ead[label=e2, mark]{zhou@ese.eur.nl, C.Zhou@DNB.nl}}
\address[A]{ Erasmus University Rotterdam, \printead{e1}}

\address[B]{Erasmus University Rotterdam, De Nederlandsche Bank, and Tinbergen Institute, \printead{e2}}
\end{aug}

\begin{abstract}
The block maxima (BM) approach in extreme value analysis fits a sample of block maxima to the Generalized Extreme Value (GEV) distribution. We consider all potential blocks from a sample, which leads to the All Block Maxima (ABM) estimator. Different from existing estimators based on the BM approach, the ABM estimator is permutation invariant. For i.i.d.\ observations with positive extreme value index $\gamma > 0$, we show the asymptotic normality of the ABM estimator, which has a lower asymptotic variance than the disjoint BM and sliding BM estimators. Simulation studies justify our asymptotic theories. A key step in establishing the asymptotic theory for the ABM estimator is to obtain asymptotic expansions for the tail empirical process based on higher order statistics with weights.
\end{abstract}

\begin{keyword}[class=MSC2020]
\kwd[Primary ]{62G32}
\kwd[; secondary ]{62G30}
\end{keyword}

\begin{keyword}
\kwd{Block maxima method}
\kwd{maximum likelihood estimation}
\kwd{weighted tail empirical process}
\kwd{Radon-Nikodym derivative}
\kwd{heavy-tails}
\end{keyword}

\end{frontmatter}


\section{Introduction} \label{sec: intro}

Consider a random sample $\{ X_1, X_2,...,X_n \}$ with a common distribution function $F$ and assume that $F$ belongs to the domain of attraction of a Generalized Extreme Value (GEV) distribution: there exist some sequences $a_n>0$ and $b_n$ such that
\begin{align} \label{DoA}
\lim_{n \to \infty} F^{n}(a_nx+b_n) = G_{\gamma}(x)= \exp(-(1+x\gamma)^{-1/\gamma}), \text{ for } 1+\gamma x>0.
\end{align}
The only parameter $\gamma$ in the limit distribution is called the \textit{extreme value index}. It governs not only the limit distribution but also the tail behavior of the original distribution $F$. The case $\gamma > 0$ corresponds to heavy-tailed distributions (e.g., Pareto, Student-t), $\gamma = 0$ to light-tailed distributions (e.g., normal, exponential), and $\gamma < 0$ to distributions with a finite upper endpoint. Estimation of $\gamma$ is central to applications in finance, insurance, hydrology, and environmental science.

To estimate $\gamma$, the classical Block Maxima (BM) approach follows from the domain of attraction condition. One may divide the entire sample into $k$ disjoint blocks of size $ m=n/k$ and fit the corresponding block maxima to the GEV distribution, for example, by the Maximum Likelihood (ML) method. This results in an estimator of $(\gamma, a_m, b_m)$. Such estimators are consistent and jointly asymptotically normal under mild conditions \citep{bucher2016maximum, dombry2019maximum}. Besides constructing blocks disjointly, one may also construct blocks in various ways to increase the number of block maxima. For example, \cite{buecher2018} construct a sliding block estimator that is more efficient by considering the maxima of consecutive overlapping blocks, thereby introducing dependence between the block maxima even if the underlying observations are independent.

An undesired feature of all existing estimators from the BM approach is that they are not permutation invariant. If the observations are i.i.d., estimators of the parameter $\gamma$ should not depend on the order of the observations. However, for all existing BM estimators, based on either disjoint or sliding blocks, permutating the observations will in general not lead to the same estimate of $\gamma$. Based on this fact, \cite{mefleh2019permutation} construct a permutation bootstrap method for reducing the estimation uncertainty in estimators from the BM approach.

We revisit and develop a permutation-invariant estimator first considered by \cite[Chapter~5]{segers2001}, derived from a BM approach.
Specifically, we consider all possible blocks of a fixed size $m$ that could have occured when sampling from the original observations. Denote $X_{1:n} \leq X_{2:n} \leq ... \leq X_{n:n}$ as the order statistics of the random sample. The lowest observation that can be a block maximum is $X_{m:n}$. Note that $X_{m:n}$ can be the block maximum in one block only, that is, when the lowest $m$ observations form the block. In general, $X_{n-i+1:n}$ is a block maxima in $\binom{n-i}{m-1}$ blocks for $i=1,2,...,n-m+1$.  Therefore, the multiset consisting of all block maxima
\begin{align*}
\left\{  \underbrace{X_{n:n},..., X_{n:n}}_{ \binom{n-1}{m-1}}, ... \underbrace{X_{n-i+1:n},..., X_{n-i+1:n}}_{ \binom{n-i}{m-1} } , ... , \underbrace{X_{m:n}}_{1 } \right\}
\end{align*}
can be viewed as a repeated sample of order statistics, where $X_{n-i+1:n}$ is repeated $\binom{n-i}{m-1}$ times for $1 \leq i \leq n-m+1$. Consequently, the multiset contains $\binom{n}{m}$ repeated observations. We fit this multiset of all block maxima to the GEV distribution by means of the ML method as if they were i.i.d. observations. This results in a permutation-invariant estimator of $(\gamma,a_m, b_m)$ --- the All Block Maxima (ABM) estimator. Note that the idea of considering statistics based on all blocks has been investigated in \cite[Chapter 5]{segers2001}. Nevertheless, only consistency has been studied therein, whereas we aim at establishing the asymptotic normality of the ABM estimator.

A heuristic rationale for the validity of the ABM estimator can be provided by considering an equivalent characterization of $F$ belonging to the domain of attraction of a GEV distribution: the limit relation \eqref{DoA} is equivalent to the existence of a positive function $f$ such that
\begin{align}\label{GPD}
\lim_{t \uparrow x^*} \frac{1-F(t+xf(t))}{1-F(t)}=D_\gamma(x):=(1+\gamma x)^{-1/\gamma}, \text{ for } 1+\gamma x>0.
\end{align}
where $x^* =\sup \{ x: F(x)<1 \}$ and $1-D_\gamma(x)$ is the CDF of the Generalized Pareto (GP) distribution \citep[Theorem 1.1.6]{deHaanFerreira2006extreme}. Consequently, order statistics above a high threshold can be approximately regarded as order statistics from a GP distribution. One can assign proper weights to these order statistics in line with the Radon-Nikodym (RN) derivative between the GEV and GP distribution so that the resulting weighted order statistics will approximately follow a GEV distribution. In Section \ref{subsec: measure} we show that the repetition scheme of the order statistics implicit in the multiset of all block maxima corresponds to the RN derivative between the GEV and GP distribution. Therefore, fitting the sample of all block maxima to the GEV distribution by ML is a valid estimation method.

Our main theorem, Theorem \ref{main_theorem}, states the asymptotic normality of the ABM estimator when the observations are i.i.d. and $\gamma>0$. The proof relies heavily on a weighted tail empirical process result of the order statistics, presented in Proposition \ref{empirical_distribution}. This result is of independent interest and can be used for proving asymptotic behaviour of other estimators based on all block maxima.

The ABM estimator has a lower asymptotic variance than the disjoint BM and sliding BM estimators. This theoretical result is confirmed by extensive simulation studies. Moreover, the simulation study further suggests that the ABM estimates against the effective number of observations used in estimation, $n/m$, yield a smooth path which facilitates a straightforward choice of the optimal number of blocks. Finally, the ABM estimator also performs better than the disjoint BM estimator when the observations are not i.i.d.: both for observations forming a serially dependent but stationary time series, and for observations drawn from non-stationary distributions.

\section{The All Block Maxima Method: $\gamma>0$} \label{sec: idea}

We study and apply the ABM estimator in case of a positive extreme value index. For $\gamma>0$, the domain of attraction condition implies that for $\sigma_n=F^{\leftarrow}(1-1/n)$ and any $x>0$, 
\begin{align}\label{Fr\'echet_DOA}
 \lim_{n \to \infty} F^{n} (\sigma_n x)  = \exp \left(  -x^{-1/\gamma} \right).
\end{align}
 Compared to the general domain of attraction condition in \eqref{DoA}, the shift $b_n$ is set to zero and the normalizing constant $a_n$ is related to $\sigma_n$ by $\sigma_n=\gamma a_n$.  Consequently, we can fit the sample of all block maxima $
\  \underbrace{X_{n:n},..., X_{n:n}}_{ \binom{n-1}{m-1}}, ... \underbrace{X_{n-i+1:n},..., X_{n-i+1:n}}_{ \binom{n-i}{m-1} } , ... , \underbrace{X_{m:n}}_{1 } 
$ to a scaled Fr\'{e}chet distribution to estimate $(\gamma,\sigma_n)$.
\subsection{Weighted Maximum Likelihood for estimating $\gamma$} \label{subsec: estimator}
Similar to \cite{bucher2018maximum}, we deal with potentially negative observations by left-truncating all the observations with a constant $c>0$. Denote $X_i^c=\max(X_i,c)$ and denote the CDF of the scaled Fr\'echet distribution by $G(x; \theta)=\exp\left(-\left(\frac{x}{\sigma}\right)^{-1/\gamma} \right)$ for $x>0$, with shape and scale parameters $\theta=( \gamma, \sigma   ) \in (0, \infty)^2$. Note that in the context of ML estimation, the log-likelihood based on a sample of repeated observations can be viewed as a weighted log-likelihood for non-repeated observations. When fitting the sample of all block maxima to the scaled Fr\'echet distribution, the weight corresponding to the order statistic $X_{n-i+1:n}$ is
\begin{align}\label{weight_pi}
p_i=\binom{n-i}{m-1}/\binom{n}{m}, \text{ with    } \sum_{i=1}^{n-m+1}p_i=1.
\end{align}
The log-likelihood function is then
\begin{align}\label{Frechet_likelihood}
L(\theta)&=\sum_{i=1}^{n-m+1} p_i l_\theta\left(X^c_{n-i+1:n}\right), \\
\mbox{ with } l_\theta(x)&= \log\left(\frac{1}{\gamma \sigma}\right)-\left(\frac{x}{\sigma}\right)^{-1/\gamma}-(1/\gamma+1)\log\left(\frac{x}{\sigma}\right).
\end{align}
By taking the partial derivatives of $L(\theta)$ with respect to $\theta=(\gamma, \sigma)$, we obtain that the ML estimator $\hat{\gamma}$ is given by the zero of the function
\begin{align*}
\Psi_n(\gamma)=\gamma +
\frac{\sum_{i=1}^{n-m+1}p_i \left( X^c_{n-i+1:n}\right)^{-1/\gamma}\log \left(X^c_{n-i+1:n}\right)}{\sum_{i=1}^{n-m+1}p_i \left( X^c_{n-i+1:n} \right)^{-1/\gamma}}-\sum_{i=1}^{n-m+1}p_i \log \left( X^c_{n-i+1:n} \right),
\end{align*}
and the ML estimator for $\sigma$ is given by
\begin{align*}
\hat{\sigma}=\sum_{i=1}^{n-m+1} \left( p_i \left( X^c_{n-i+1:n} \right) ^{-1/\hat{\gamma}} \right)^{-\hat{\gamma}}.
\end{align*}
The existence and uniqueness of the ML estimator is guaranteed if the order statistics do not all have the same value. This follows directly from Lemma 2.1 in \cite{bucher2018maximum}.

Finally, for practical purposes, instead of computing the binomial coefficients for each weight, it is more efficient to calculate the weights $p_i$ via the following recursion, initiated by $p_1=m/n$,
\begin{align*}
p_{i+1}=\left( \frac{n-m-i}{n-1-i} \right) p_i,  \hspace{10mm}  \text{ for }  1 \leq i \leq n-m+1.
\end{align*}
\subsection{Measure Transformation} \label{subsec: measure}
The ABM estimator can be heuristically understood by considering the measure transformation from a GP distribution to a GEV distribution. Here we present that heuristic argument in more detail for $\gamma>0$. 

First, for $\gamma>0$, the measure transformation can be simplified to that from a Pareto to a Fr\'echet distribution because the domain of attraction condition \eqref{Fr\'echet_DOA} is equivalent to
\begin{align}\label{Pareto}
\lim_{t \to \infty} \frac{1-F(tx)}{1-F(t)}:=H_{\gamma}(x)=x^{-1/\gamma} \text{ for all } x>0.
\end{align}
where $1-H_{\gamma}(x)$ is the CDF of a Pareto distribution with shape parameter $\gamma$. Effectively, for a large threshold $t$, excess ratios over the threshold follow a Pareto distribution. For example, take $t=X_{n-k:n}$, and denote  $Y_{k-i+1:k}=X_{n-i+1:n}/t$ for $i=1,2,\ldots,k$, then $Y_{1:k}\leq \ldots\leq Y_{k:k}$ can be approximately viewed as order statistics from the Pareto distribution. Heuristically, we have $H_{\gamma}(Y_{k-i+1:k}) \approx 1-\frac{k-i+1}{k}=\frac{i-1}{k}$ for $1 \leq i \leq k$.

Secondly, consider the Radon-Nikodym derivative between the Fr\'echet and Pareto distribution. The Pareto and the Fr\'echet distribution are probability measures with densities
$$d_{\text{Pareto}}(x):=\frac{1}{\gamma}x^{-1/\gamma -1}  \mbox{ and  } d_{\text{Fr\'echet}}(x):=d_{\text{Pareto}}(x) \exp(-H_{\gamma}(x))$$ respectively. Consequently, the Radon-Nikodym derivative between the Fr\'echet and the Pareto distribution is
$$\frac{d_{\text{Fr\'echet}}}{d_{\text{Pareto}}}(x)=\exp(-H_\gamma (x)).$$

Following all of the above, the RN derivative evaluated at the observation $Y_{k-i+1:k}$ satisfies $$ \frac{d_{\text{Fr\'echet}}}{d_{\text{Pareto}}}\left(Y_{k-i+1:k}\right)=\exp \left( -H_{\gamma}\left(Y_{k-i+1:k}\right) \right) \approx\exp \left( - \frac{i-1}{k}\right).$$ Moreover, notice that $\sum_{i=1}^{\infty} \exp \left( -\frac{i-1}{k} \right)= \frac{1}{1-e^{-1/k}} \approx k$. Therefore, if we assign the weight $$q_i=\frac{1}{k} \exp \left( - \frac{i-1}{k}\right)$$ to the order statistic $Y_{k-i+1:k}$ for $i \leq i \leq k$, the weighted order statistics will approximately follow a Fr\'echet distribution. Note that $q_i$ does not depend on $\gamma$ or any auxiliary function.

We will show in Lemma \ref{weights} that the weights $p_i=\binom{n-i}{m-1}/\binom{n}{m}$ used in the ABM method are uniformly close to the weights derived from the measure transformation $q_i$. Essentially, the ABM method, by changing the equal weight for the order statistics to $p_i$ in the likelihood \eqref{Frechet_likelihood}, is tranforming the weighted higher order statistics to approximately Fr\'echet distributed order statistics with a proper scale. 

\section{Asymptotic theory} \label{sec: asymptotic}
\subsection{Conditions} \label{subsec: conditions}
We assume the standard second order condition to characterize the speed at which the limit in Condition \eqref{DoA} is attained, see Theorem 2.3.9 in \cite{deHaanFerreira2006extreme}.
\begin{condition}\label{SOC_condition}
There exist an eventually positive or negative function $a$ with $\lim_{t \to \infty} a(t)=0$, $\gamma>0$ and  $\rho < 0$ such that
\begin{align}\label{SOC_condition}
\lim_{t \to \infty}  \frac{1}{a(t)} \left( \frac{1-F(tx)}{1-F(t)}-x^{-1/\gamma} \right)=x^{-1/\gamma} \frac{x^{\frac{\rho}{\gamma}}-1}{ \gamma \rho}, \hspace{5mm} \forall x>0.
\end{align}
\end{condition}

\begin{remark}
Because the ABM method uses the order statistics directly, the second order condition in \eqref{SOC_condition} corresponds to the second order condition used in the Peaks-Over-Threshold (POT) approach.  A similar situation arises in \cite{wager2014subsampling} where a subsampling maxima approach is proposed. The theoretical results therein also rely on a POT second order condition on the function $U=(1/(1-F) )^{\leftarrow}$, where $^\leftarrow$ is the left-continuous inverse.

For other BM methods, such as using disjoint or sliding blocks, usually one considers the function $V=(1/(-\log F) )^{\leftarrow}$ and assumes a second order condition for $V$. For instance, there exist an eventually negative or positive function $A(t)$ with $\lim_{t \to \infty} A(t)=0$, $\gamma>-1/2$ and $\rho'\leq 0$ such that
\begin{align*}
\lim_{t \to \infty} \frac{\frac{V(tx)-V(t)}{a(t)}-\frac{x^{\gamma}-1}{\gamma}}{A(t)} =\int_{1}^{x}s^{\gamma-1} \int_{1}^{s}u^{\rho-1}du ds,
\end{align*}
exists for all $x>0$, see \cite{dombry2019maximum}. When $\gamma>0$, as in \eqref{SOC_condition}, this condition can be simplified to
\begin{equation}\label{SOC_condition_V}
\lim_{t \to \infty} \frac{1}{A(t)}\left( \frac{V(tx)}{V(t)} -x^{\gamma} \right) =x^{\gamma} \frac{x^{\rho'}-1}{\rho'}=:\Psi_{\gamma, \rho' }(x),
\end{equation} for all $x>0$.

The two types of second order conditions in \eqref{SOC_condition} and \eqref{SOC_condition_V} are similar but not equivalent. They are equivalent if $-1 \leq \rho, \rho'<0$. In this case, $\rho=\rho'$.  In addition, under mild conditions, if the second order condition in \eqref{SOC_condition} holds, one obtains the second order condition in \eqref{SOC_condition_V}, with $\rho'=\max(\rho,-1)$, see \cite{drees2003large}.
\end{remark}

The second order condition \eqref{SOC_condition} implies the following inequality; see Theorem 5.1.4 in \cite{deHaanFerreira2006extreme}. For any $\epsilon, \delta>0$ there exists $t_0=t_0(\epsilon, \delta)>1$ such that for all $t$, $tx \geq t_0$,
\begin{align}\label{uniform_ineq}
\left| \frac{\frac{1-F(tx)}{1-F(t)}-x^{-1/\gamma}}{a(t)}-x^{-1/\gamma} \frac{x^{ \rho / \gamma}-1}{\rho \gamma} \right| \leq \epsilon x^{-1/\gamma+\rho/\gamma}\max(x^\delta, x^{-\delta})
\end{align}

Next we impose the following conditions on the number of blocks $k$ (or equivalently on the block size $m$).
\begin{condition}\label{n,k,m_condition}
Let $k:=k_n$ and $m:=n/k$ be sequences satisfying that, as $n \to \infty$
\begin{align*}
\frac{k}{n^{l'}} \to \infty   \text{ with } l'>0 \text{ and } k=O(n^l) \text{ with }  l<\frac{\max(\rho,-1)}{\max(\rho,-1)-1/2}.
\end{align*}
\end{condition}

\begin{remark}\label{remark_condition3.3}
One can show that the function $a(t)$ is $\rho/ \gamma$-regularly varying as $ t \to \infty$. Moreover, by the properties of a regularly varying function for any $\delta>0$ there exists $N(\delta)$ such that for all $n> N(\delta)$ one obtains that $a(\sigma_m) <\left(\frac{n}{k}\right)^{\rho+\delta}$. Hence the requirement $l<\frac{\rho}{\rho-1/2}$ implies that $\sqrt{k} a(\sigma_m) \to 0$ as $n \to \infty$. This assumes away the asymptotic bias; see Theorem \ref{main_theorem}.

 For other BM methods, the requirement on $k$ is usually related to the second order condition \eqref{SOC_condition_V} with $\sqrt{k}A(n/k)=O(1)$ as $n\to\infty$. Notice that $A(t)$ is $\rho'$-regularly varying, thus w.l.o.g. assuming $A(t)=Ct^{\rho'}$ for some constant $C$, the requirement on $k$ for other BM methods is equivalent to
 $$k=O(n^l) \text{ with } l\leq\frac{\rho'}{\rho'-1/2}.$$
Compared to the other BM methods, our requirement on $k$ is in line with the fact that $\rho'=\max(\rho,-1)$.
\end{remark}
\subsection{Main theorem} \label{subsec: theorem}
The following theorem shows the asymptotic normality of the ABM estimator for $\gamma>0$.
\begin{theorem}\label{main_theorem}
Under Conditions \ref{SOC_condition} and \ref{n,k,m_condition}, with probability tending to one there exists a unique maximizer $(\hat{\gamma}_n, \hat{\sigma}_n )$ of the Fr\'echet log-likelihood given by \eqref{Frechet_likelihood}. Moreover, denoting $\tau$ as the Euler-Mascheroni constant and $\Gamma$ as the Gamma function, we have that, as $n \to \infty$,
\begin{align*}
\sqrt{k}
\left( \frac{1}{\hat{\gamma}_n} -\frac{1}{\gamma},\hspace{1mm} \hat{\sigma}_n/\sigma -1  \right)^{T}\td 	\mathbb{N} (0, M\Sigma M'), \text{ with } M= \frac{6}{\pi^2} \begin{bmatrix} \gamma^{-2} & \gamma^{-1}(1-\tau) &  -\gamma^{-2} \\ \tau-1&-\gamma(\Gamma''(2)+1)&1-\tau \end{bmatrix},
\end{align*}
and
$$
\Sigma=
\begin{bmatrix}
\gamma^2 p &-&-\\
-\frac{\gamma}{2}(1-\tau +\log(2))&1/2&- \\
\gamma^2 ((3-\tau-\frac{\log(2)}{2})\log(2) -\frac{\pi^2}{12}  )&-\gamma \log(2)&\gamma^2 2 \log(2)
\end{bmatrix},
$$
where $p=1/2(\tau +\log(8)-1) -\frac{1}{12}(\pi^2-6(\tau+\log(2))+6(\tau+\log(2))^2) -(1-\tau-\log(2))+(2-\tau)(1-\tau)$ and the empty entries are defined by symmetry of the matrix.

\end{theorem}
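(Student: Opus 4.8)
The plan is to treat $(\hat\gamma_n,\hat\sigma_n)$ as an M-estimator solving the weighted score equations $\partial_\gamma L=\partial_\sigma L=0$ and to linearize it around the true parameter, letting the randomness enter through a small number of tail functionals whose joint asymptotic normality is supplied by Proposition \ref{empirical_distribution}. Writing $\sigma=\sigma_m=U(m)$ with $U=(1/(1-F))^{\leftarrow}$ and normalizing $Z_i:=X^c_{n-i+1:n}/\sigma$, differentiating $l_\theta$ shows that both score equations are governed by the three weighted sums
$$
W_n^{(1)}=\sum_i p_i Z_i^{-1/\gamma},\qquad W_n^{(2)}=\sum_i p_i Z_i^{-1/\gamma}\log Z_i,\qquad W_n^{(3)}=\sum_i p_i \log Z_i,
$$
the scale equation being equivalent to $W_n^{(1)}=1$ at the true parameter and the shape equation to $\Psi_n(\gamma)=0$. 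I would therefore profile out $\sigma$ through $\hat\sigma=(\sum_i p_i (X^c_{n-i+1:n})^{-1/\hat\gamma})^{-\hat\gamma}$ and study $\hat\gamma$ as the unique zero of $\Psi_n$; existence and uniqueness with probability tending to one follow from the cited Lemma~2.1 of \cite{bucher2018maximum} once a preliminary consistency argument localizes the root.

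The core step is the joint asymptotic normality of $\sqrt k$ times the centered vector $(W_n^{(1)},W_n^{(2)},W_n^{(3)})$. Here I would first replace the combinatorial weights $p_i$ by the measure-transformation weights $q_i=\tfrac1k e^{-(i-1)/k}$ via Lemma \ref{weights}, then rewrite each $W_n^{(j)}$ as a weighted integral of the tail empirical process of the $Z_i$ and invoke Proposition \ref{empirical_distribution} to obtain a Gaussian limit. The entries of $\Sigma$ then arise by evaluating the covariances of the limiting process integrated against the three test functions $z\mapsto z^{-1/\gamma}$, $z^{-1/\gamma}\log z$, $\log z$, weighted by $e^{-s}$ in the variable $s=(i-1)/k$; the constants in $\Sigma$ come out of integrals such as $\int_0^\infty e^{-s}\log s\,ds=-\tau$ and $\int_0^\infty e^{-s}(\log s)^2\,ds=\tau^2+\pi^2/6$, together with the $\log 2$ terms produced by the covariance kernel of the process. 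In parallel I must control the deterministic bias: using the uniform second-order bound \eqref{uniform_ineq} and the fact that $\sqrt k\,a(\sigma_m)\to 0$ (Remark \ref{remark_condition3.3}), I would show that the centering of each $W_n^{(j)}$ converges at rate $o(1/\sqrt k)$ to its nominal Fréchet value, so that no asymptotic bias term appears; the truncation at $c$ is negligible because the weights concentrate on the top $O(k)$ order statistics, all of which eventually exceed $c$.

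Finally I would linearize. A Taylor expansion of the score around $(\gamma,\sigma)$ shows that the suitably rescaled Hessian converges in probability to a deterministic negative-definite matrix (the Fréchet information), whose inverse supplies the $6/\pi^2$ prefactor and the $\gamma$-dependent entries of $M$; combined with the Jacobian of the map to the coordinates in which the theorem is stated, this yields the representation
$$
\sqrt k\left(\tfrac1{\hat\gamma_n}-\tfrac1\gamma,\ \tfrac{\hat\sigma_n}{\sigma}-1\right)^{T}=M\,\sqrt k\,\bigl(W_n-\E W_n\bigr)+o_P(1),
$$
after which the delta method and Slutsky's lemma deliver the stated $\mathbb N(0,M\Sigma M')$ limit. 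The main obstacle is the middle step: establishing the weighted empirical-process convergence with the non-standard weights $p_i$, justifying the interchange of the weighting and the empirical process, and then carrying the covariance computation through cleanly enough to land the exact entries of $\Sigma$, while the bias and truncation controls guarantee that every lower-order term may be discarded.
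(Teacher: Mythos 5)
Your proposal follows essentially the same route as the paper: the paper likewise reduces the theorem to the joint asymptotic normality of the three weighted functionals $\sum_i p_i f_j\left(X^c_{n-i+1:n}/\sigma_m\right)$ for $f_1=x^{-1/\gamma}\log x$, $f_2=x^{-1/\gamma}$, $f_3=\log x$, established via Lemma \ref{weights} and Proposition \ref{empirical_distribution} (after disposing of the order statistics below the threshold $(\log k)^{-\delta}\sigma_m$), with the bias removed through $\sqrt{k}\,a(\sigma_m)\to 0$ and the covariance matrix computed from the Brownian covariance kernel exactly as you describe. The only cosmetic difference is that the paper does not carry out the M-estimation linearization by hand: it verifies the two sets of conditions of Theorem 2.5 in \cite{bucher2018maximum}, which delivers the matrix $M$, the delta-method step, and the existence/uniqueness claim in one stroke.
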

By calculating $M \Sigma M'$ and applying the delta method, we obtain that as $n \to \infty$, $\sqrt{k} (\hat{\gamma}-\gamma)  \overset{d}{\to} \mathbb{N}(0, \gamma^2 a) $ where $a \approx 0.393 $. Notice that, given the same sequence of $k$, the Hill estimator has asymptotic variance $\gamma^2$, see e.g. \cite{de1998asymptotic},  the BM estimator based on disjoint blocks has asymptotic variance $0.608 \gamma^2$ \citep{bucher2018maximum} and the sliding block quasi-maximum likelihood estimator has asymptotic variance $0.494 \gamma^2$ \citep{buecher2018}. Among these three BM-type estimators, the ABM estimator has the lowest asymptotic variance.
\begin{remark}
The asymptotic variance does not depend on unknown parameters beyond $\gamma$ itself. In practice, one may estimate $\gamma$ consistently by any method (including ABM) and plug in the estimate to construct confidence intervals. We demonstrate the accuracy of the resulting normal approximation and confidence intervals in the simulation study (Section~\ref{sec: simulation}).
\end{remark}
\begin{remark}
Theorem~\ref{main_theorem} is established under the assumption of i.i.d.\ observations with $\gamma > 0$. These restrictions parallel those in competing results for BM-type estimators; for instance, \cite{bucher2016maximum} also require $\gamma > 0$ in parts of their analysis. Extending the asymptotic theory to $\gamma \leq 0$ or to serially dependent observations are natural directions for future work. Nevertheless, our simulation study in Section~\ref{sec: simulation} provides evidence that the ABM estimator performs well beyond the i.i.d.\ setting.
\end{remark}
To prove Theorem \ref{main_theorem}, we establish the tail empirical process for the sample of all block maxima in the proposition below. Define the empirical tail distribution function of the truncated order statistics $X^c_{n-i+1:n}=\max \left( X_{n-i+1:n}, c \right) $ with weights $p_i$, given in \eqref{weight_pi}, as follows:
\begin{align*}
\mathbb{P}_n(x)=\sum_{i=1}^{n-m+1}p_i \mathbb{I}\{\frac{ X^c_{n-i+1:n} }{\sigma_m } > x\}.
\end{align*}
\begin{proposition}\label{empirical_distribution}
Under Conditions \ref{SOC_condition} and \ref{n,k,m_condition}, we have that for any $\delta>0$ and any $0<\epsilon<1/2$ there exists $\tilde{\epsilon}=\tilde{\epsilon}(\rho,l, \epsilon)$ and a series of standard Brownian motions $\{W_n(\cdot)\}_{n=1}^{\infty}$ defined on $[0,+\infty)$, such that as $n \to \infty$,
\begin{align*}
\sup_{x> \left( \log k \right)^{-\delta}}k^{\tilde{\epsilon}}x^{\frac{1/2-\epsilon}{\gamma}} \left| \sqrt{k} \left( \mathbb{P}_n(x) -\left(1-e^{-x^{-1/\gamma}} \right) \right) -e^{-x^{-1/\gamma}}W_n(x^{-1/\gamma})  \right|=o_p(1).
\end{align*}
\end{proposition}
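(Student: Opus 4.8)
The plan is to reduce the weighted tail empirical distribution $\mathbb{P}_n$ to a smooth, monotone transform of the ordinary tail empirical count, and then to transfer a weighted Gaussian approximation for that count through the transform. Write $\bar F=1-F$ and $s:=x^{-1/\gamma}$, and let $N_n(x):=\#\{\,j\le n: X_j>\sigma_m x\,\}$ denote the number of exceedances over the level $\sigma_m x$. Since $\sigma_m\to\infty$ for $\gamma>0$, we have $\sigma_m x>c$ for every $x>(\log k)^{-\delta}$ once $n$ is large, so the truncation by $c$ is inactive on the relevant range and the indicator $\mathbb{I}\{X^c_{n-i+1:n}/\sigma_m>x\}$ equals one exactly for $i\le N_n(x)$. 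Setting $P(j):=\sum_{i=1}^{j}p_i$, it follows that $\mathbb{P}_n(x)=P\bigl(N_n(x)\bigr)$ on the event $\{N_n(x)\le n-m+1\}$, which has probability tending to one uniformly over the range by a crude exceedance bound.

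First I would linearise the transform $P$. By Lemma \ref{weights} the weights $p_i$ are uniformly close to $q_i=\tfrac1k e^{-(i-1)/k}$, and summing the geometric series gives $P(j)=1-e^{-j/k}+r_n(j)$ with a remainder $r_n$ that is negligible, of order smaller than $k^{-1/2}$, after multiplication by the weight in the statement. Hence, with $Z_n(x):=N_n(x)/k$, a first-order Taylor expansion of $w\mapsto 1-e^{-w}$ about $w=s$ yields
\[
\sqrt{k}\bigl(\mathbb{P}_n(x)-(1-e^{-s})\bigr)=e^{-s}\,\sqrt{k}\,\bigl(Z_n(x)-s\bigr)+R_n(x),
\]
where the factor $e^{-s}=e^{-x^{-1/\gamma}}$ in front of the leading term is exactly the one appearing in the proposition. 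It then remains (i) to show that the quadratic Taylor remainder $R_n$, of size $O\bigl(\sqrt k\,(Z_n-s)^2\bigr)$, is $o_p(1)$ uniformly after weighting, and (ii) to couple $\sqrt k(Z_n(x)-s)$ with a Brownian motion $W_n(s)$.

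For step (ii) I would decompose $\sqrt{k}(Z_n(x)-s)=k^{-1/2}\bigl(N_n(x)-\E N_n(x)\bigr)+k^{-1/2}\bigl(\E N_n(x)-ks\bigr)$. The deterministic bias uses $\E N_n(x)=n\bar F(\sigma_m x)=k\,\bar F(\sigma_m x)/\bar F(\sigma_m)$ together with the uniform second-order inequality \eqref{uniform_ineq}; it is of order $\sqrt{k}\,a(\sigma_m)$ times a power of $s$, and vanishes, even after weighting, because $\sqrt{k}\,a(\sigma_m)\to0$ under Condition \ref{n,k,m_condition}, as noted in Remark \ref{remark_condition3.3}. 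The centred part is, up to the probability integral transform $X_j\mapsto U_j=F(X_j)$, precisely the standardised uniform tail empirical process evaluated at $m\bar F(\sigma_m x)=s(1+o(1))$, whose variance equals $s$, matching $\Var W_n(s)$. Applying a weighted Hungarian/KMT strong approximation for the tail empirical process, in the spirit of de Haan--Ferreira, Theorem 2.4.8, or Cs\"org\H{o}--Cs\"org\H{o}--Horv\'ath--Mason, produces Brownian motions $W_n$ with $k^{-1/2}(N_n-\E N_n)=W_n(s)+o_p(\cdot)$ uniformly, with weight $s^{-(1/2-\epsilon)}=x^{(1/2-\epsilon)/\gamma}$ and a polynomial rate $k^{-\tilde\epsilon}$; the harmless reparametrisation from $m\bar F(\sigma_m x)$ to $s$ is absorbed using the modulus of continuity of $W_n$.

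\medskip
The main obstacle is the uniform control deep in the tail, i.e.\ for large $x$ (small $s$), where the weight $x^{(1/2-\epsilon)/\gamma}$ diverges while the expected count $ks$ becomes small and the Gaussian approximation of the rare-event count deteriorates. I expect to split the range according to the size of $ks$: where $ks$ exceeds a slowly growing threshold one uses the weighted KMT approximation, while where $ks\lesssim\log k$ one replaces it by exponential (Bernstein/Poisson) deviation bounds for the count, showing directly that both the empirical term and the Brownian term are smaller than the reciprocal weight. Keeping the Taylor remainder $R_n$ and the weight-approximation error $r_n$ negligible under the same diverging weight, and choosing $\tilde{\epsilon}=\tilde{\epsilon}(\rho,l,\epsilon)$ as the minimum of the exponents coming from the KMT rate, the second-order bias, and Lemma \ref{weights}, is where the bulk of the technical work lies.
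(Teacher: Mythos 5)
Your proposal follows essentially the same route as the paper's proof: reduce $\mathbb{P}_n(x)$ to $1-e^{-N_n(x)/k}$ by replacing the binomial weights with $q_i=\tfrac1k e^{-(i-1)/k}$ via Lemma \ref{weights} and summing the geometric series, Taylor-expand $1-e^{-w}$ about $w=x^{-1/\gamma}$, and couple $\sqrt{k}\bigl(\tfrac{n}{k}\bar{F}_n(\sigma_m x)-x^{-1/\gamma}\bigr)$ with a Brownian motion through the weighted Cs\"org\H{o}--Horv\'ath strong approximation combined with the second-order bias bound (this is exactly the paper's Lemma \ref{extended_process}). The only deviation is your proposed extra Bernstein/Poisson splitting for the deep tail where $kx^{-1/\gamma}$ is small, which turns out to be unnecessary: the weight $t^{-\nu}$ with $\nu=1/2-\epsilon$ in the strong approximation already gives uniform control down to $t\to0$.
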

All proofs are postponed to Section \ref{Proofs}.

\section{Simulation} \label{sec: simulation}

In this section, we examine the finite sample performance of the ABM estimator under both i.i.d.\ and non-i.i.d.\ settings. Throughout, we compare the ABM estimator with the disjoint block maxima (BM) estimator and the sliding block maxima (SBM) estimator of \cite{buecher2018}. All three estimators are computed by fitting a Fr\'{e}chet distribution via maximum likelihood: for BM, the likelihood is based on the $k$ disjoint block maxima with equal weights; for SBM, the likelihood is based on the $n-m+1$ sliding block maxima with equal weights; for ABM, the likelihood uses the weights $p_i$ given in \eqref{weight_pi}. The number of replications is $N=1000$ unless specified otherwise. In all figures, the ABM estimator is shown as a solid red line, the BM estimator as a dashed blue line, and the SBM estimator as a dotted green line.

\subsection{I.I.D.\ Data Generating Processes}

Let $X_1, \ldots, X_n$ be a random sample drawn from either the Pareto or Fr\'{e}chet distribution with shape parameter $\gamma$, the positive half of the Student-t distribution with $\nu$ degrees of freedom, or the Burr distribution. In Table~\ref{SOC_parameters} we summarize the first- and second-order parameters corresponding to these distributions.

\begin{table}[h!]
\centering
\caption{First- and second-order parameters}
\label{SOC_parameters}
\setlength{\tabcolsep}{4pt}
\renewcommand{\arraystretch}{1.5}
\small
\begin{tabular}{l c c c c}
\hline
 & Fr\'{e}chet & Student-t$(\nu)$ & Pareto & Burr$(\gamma, \rho)$ \\
\hline
$\boldsymbol{\gamma}$ & $\gamma$ & $1/\nu$ & $\gamma$ & $\gamma$ \\
$\boldsymbol{\rho}$   & $-1$     & $-2/\nu$ & $-\infty$ & $\rho$ \\
$\boldsymbol{\rho}'$  & $-\infty$ & $-2/\nu$ & $-1$ & $\max(\rho, -1)$ \\
\hline
\end{tabular}
\begin{flushleft}
{\footnotesize \emph{Note: $\boldsymbol{\rho}$ and $\boldsymbol{\rho}'$ denote the second-order parameters corresponding to conditions \eqref{SOC_condition} and \eqref{SOC_condition_V}, respectively. The Burr distribution has CDF $F(x) = 1-(1+x^{-\rho/\gamma})^{1/\rho}$ for $x>0$, allowing $\gamma$ and $\rho$ to be varied independently.}}
\end{flushleft}
\end{table}

\subsubsection{Asymptotic Variance}

First, we verify the asymptotic theory in Theorem~\ref{main_theorem}. The theorem implies that $\sqrt{k}(\hat{\gamma}_n - \gamma) \overset{d}{\to} \mathbb{N}(0, \gamma^2 a)$ with $a \approx 0.393$. We simulate $N=1000$ samples of $n$ observations from the Student-t$(2)$ distribution with $n$ ranging from $500$ to $10000$. For each sample, we compute the ABM estimator using $k = n^l$ with $l \in \{1/3, 1/2, 2/3\}$. The implied asymptotic variance, defined as $k \cdot \widehat{\mathrm{Var}}(\hat{\gamma}_n) / \gamma^2$, is plotted against $n$ in Figure~\ref{fig:asymptotic_variance}. The implied asymptotic variance fluctuates closely around the theoretical value $a \approx 0.393$ for all three choices of $k$, confirming the asymptotic theory.

\begin{figure}[h!]
\centering
\includegraphics[width=0.7\textwidth]{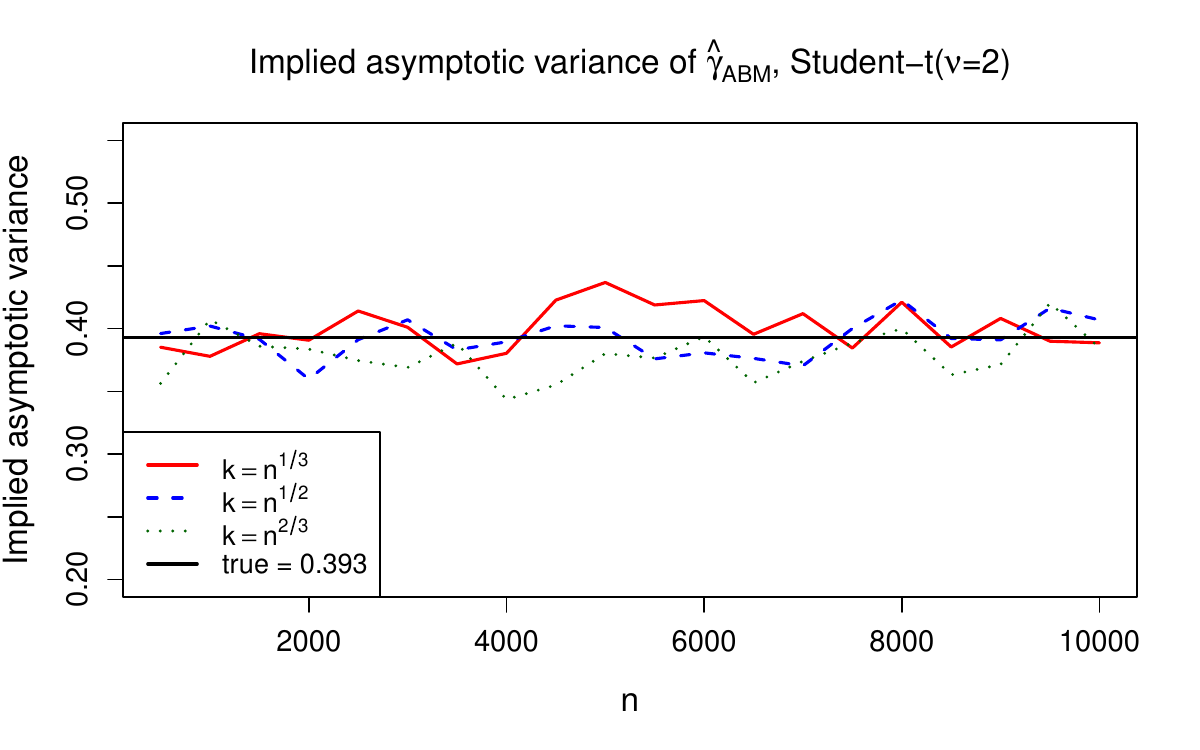}
\caption{Implied asymptotic variance of $\hat{\gamma}_{\mathrm{ABM}}$, Student-t$(\nu=2)$.}
\label{fig:asymptotic_variance}
\end{figure}

\subsubsection{Normal Approximation}

To assess the accuracy of the normal approximation implied by Theorem~\ref{main_theorem}, we examine both the distributional shape and the coverage of confidence intervals. We generate $N=1000$ samples of size $n=2000$ from the Student-t$(2)$ distribution and compute the ABM estimator with $k = \lfloor n^{1/3} \rfloor$. Figure~\ref{fig:normal_approx}(a) shows a QQ-plot of the standardized estimates $(\hat{\gamma}_n - \gamma)/(\gamma\sqrt{a/k})$ against the standard normal distribution. The points align closely with the theoretical line, with only mild departure in the upper tail. Figure~\ref{fig:normal_approx}(b) displays the empirical coverage of 90\% and 95\% confidence intervals, constructed using the asymptotic variance, across varying sample sizes $n \in \{500, 1000, 2000, 5000, 10000\}$. The coverage is close to the nominal level for all sample sizes, demonstrating that the normal approximation provides a reliable basis for inference.

\begin{figure}[h!]
\centering
\includegraphics[width=0.85\textwidth]{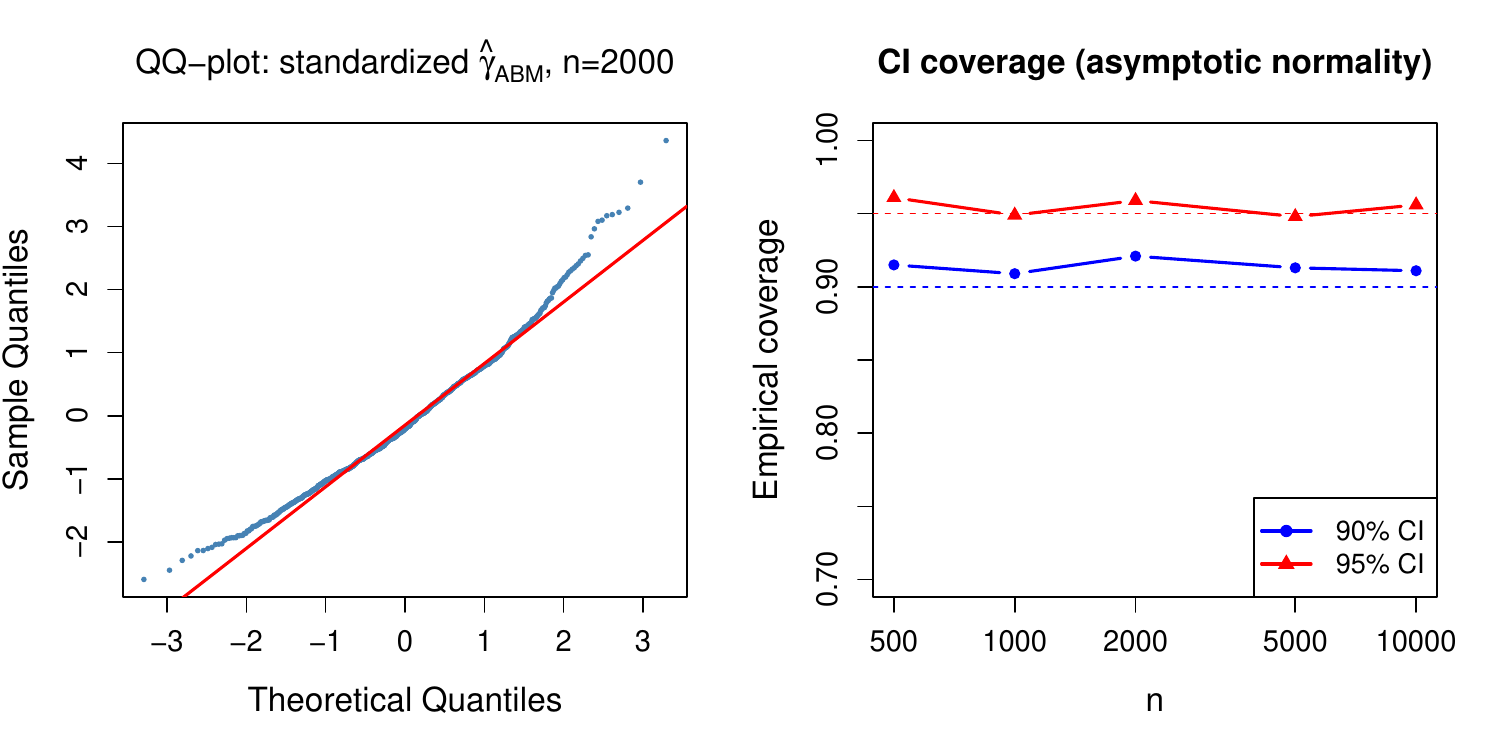}
\caption{Normal approximation accuracy of $\hat{\gamma}_{\mathrm{ABM}}$, Student-t$(\nu=2)$. (a)~QQ-plot of standardized estimates against $\mathbb{N}(0,1)$ for $n=2000$. (b)~Empirical coverage of confidence intervals based on the asymptotic normality across varying $n$.}
\label{fig:normal_approx}
\end{figure}

\subsubsection{Single Sample Performance}

Figure~\ref{fig:single_sample} compares the estimates from a single sample of $n=10000$ observations from the Student-t$(2)$ distribution against various values of $k$. All three estimators exhibit bias that increases with $k$. However, the ABM estimator demonstrates a noticeably smoother path as a function of $k$ compared to both the BM and SBM estimators. This smooth path facilitates a straightforward visual choice of the optimal~$k$ in applications.

\begin{figure}[h!]
\centering
\includegraphics[width=0.7\textwidth]{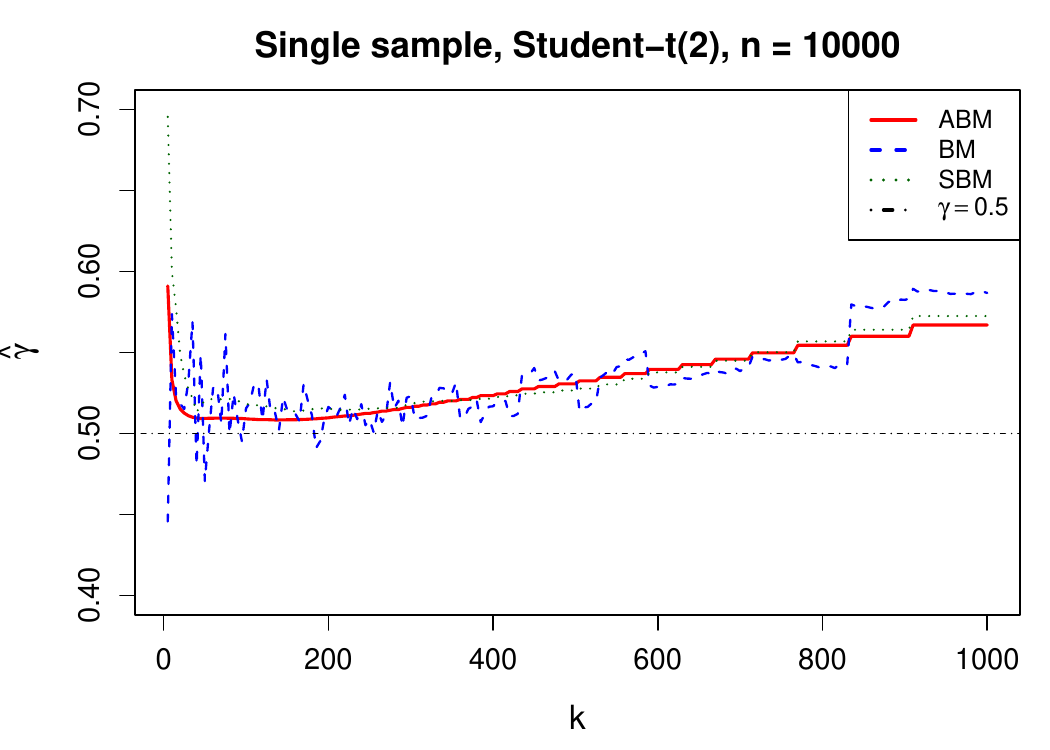}
\caption{Single sample performance, Student-t$(\nu=2)$, $n=10000$.}
\label{fig:single_sample}
\end{figure}

\subsubsection{MSE Comparison: Student-t}

We compare the MSE of the three estimators for the Student-t distribution with $\nu = 2, 3, 4, 5$ and sample size $n=1000$. The results are displayed in Figure~\ref{fig:mse_studentt}. For $\nu=2$, the ABM estimator clearly outperforms both BM and SBM across the entire range of $k$, with SBM performing between ABM and BM. As $\nu$ increases (lighter tails, $\rho = -2/\nu$ closer to zero), the advantage of ABM diminishes, and the three estimators become nearly indistinguishable. This is consistent with the fact that a smaller $|\rho|$ implies higher bias, and since ABM involves potentially lower order statistics, its bias is slightly larger. Nevertheless, with the variance reduction effect dominating the MSE, ABM still performs at par with or better than the competitors.

\begin{figure}[h!]
\centering
\subfloat[$\nu=2$]{\includegraphics[width=0.49\textwidth]{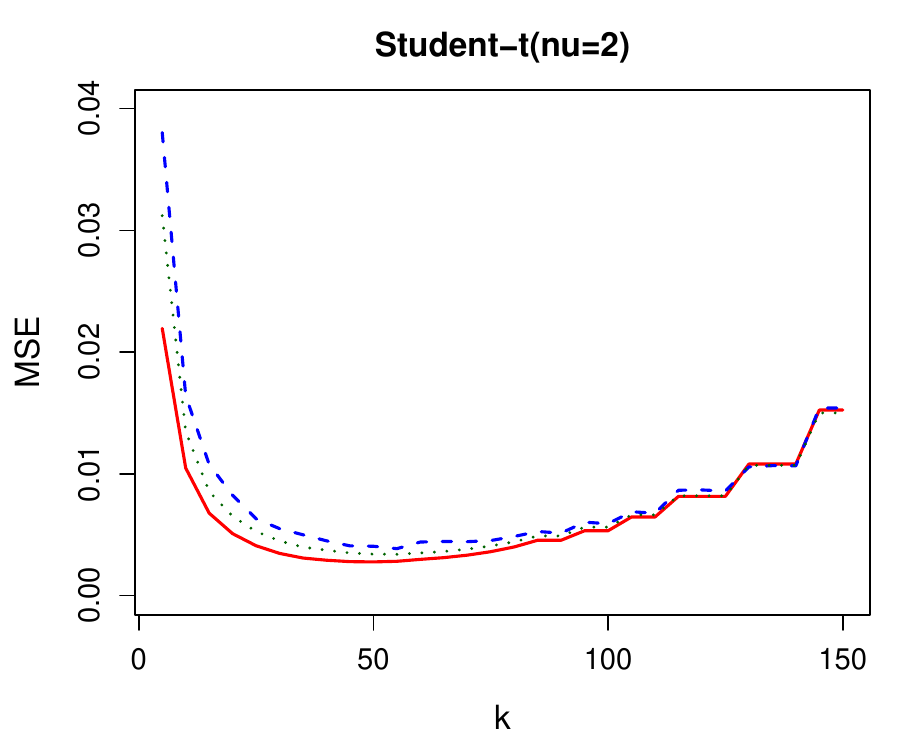}}\quad
\subfloat[$\nu=3$]{\includegraphics[width=0.49\textwidth]{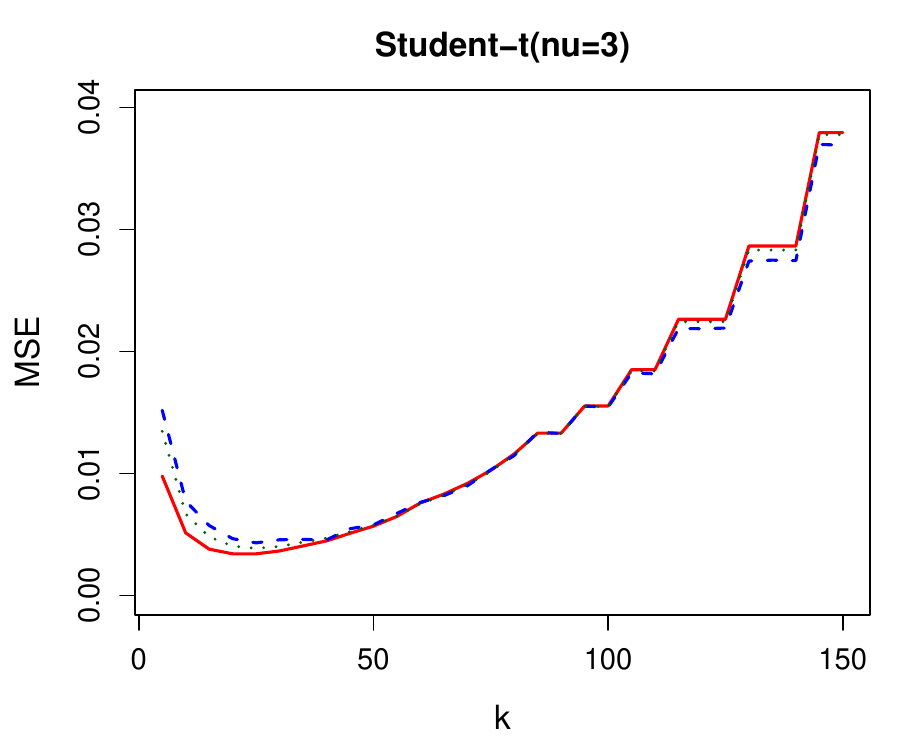}}\\
\subfloat[$\nu=4$]{\includegraphics[width=0.49\textwidth]{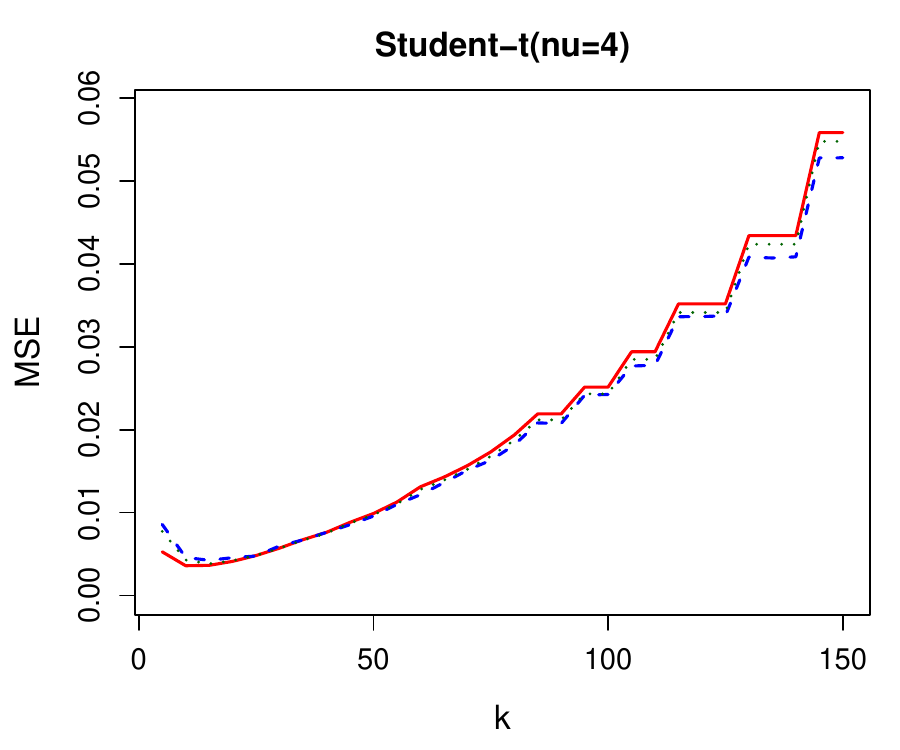}}\quad
\subfloat[$\nu=5$]{\includegraphics[width=0.49\textwidth]{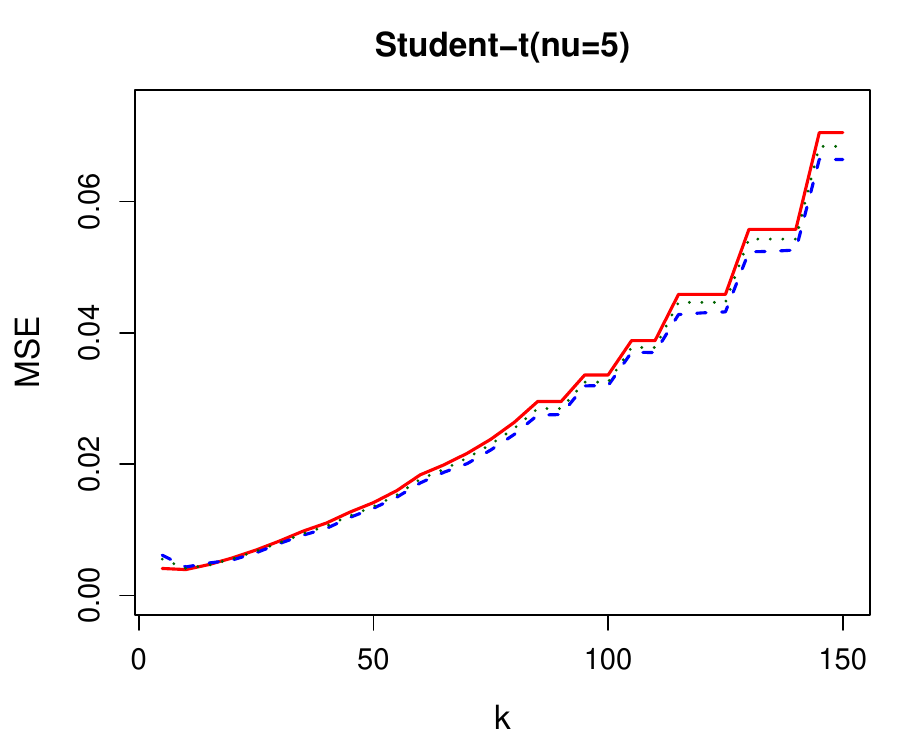}}
\caption{MSE for ABM (solid red), BM (dashed blue), and SBM (dotted green) estimators. Student-t distribution, $n=1000$.}
\label{fig:mse_studentt}
\end{figure}

\subsubsection{Bias-Variance Decomposition}

To understand the source of ABM's advantage, we decompose the MSE into bias and variance for the Student-t$(2)$ case in Figure~\ref{fig:bias_variance}. Regarding bias, the BM estimator has the smallest bias for small $k$, followed by SBM, with ABM exhibiting the largest bias. This is because ABM assigns weight to lower order statistics, which introduces additional bias. However, the ABM estimator has a substantially lower variance than both competitors, particularly for small to moderate $k$. Since the variance differences are an order of magnitude larger than the squared bias differences, the MSE is dominated by the variance component, explaining the superior MSE performance of the ABM estimator.

\begin{figure}[h!]
\centering
\subfloat[Bias]{\includegraphics[width=0.49\textwidth]{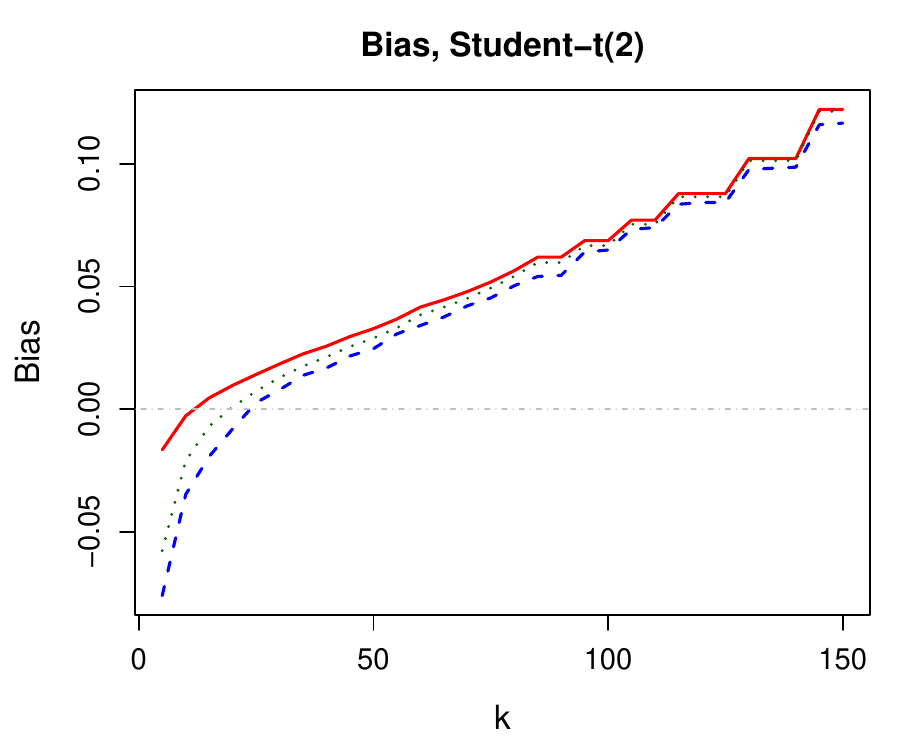}}\quad
\subfloat[Variance]{\includegraphics[width=0.49\textwidth]{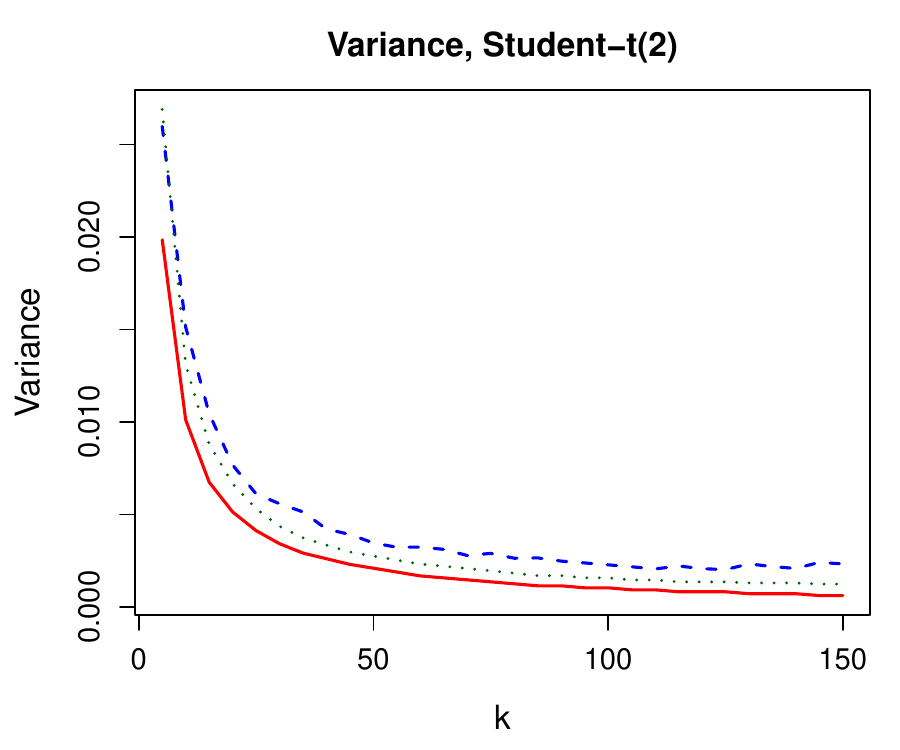}}
\caption{Bias and variance decomposition for ABM (solid red), BM (dashed blue), and SBM (dotted green). Student-t$(\nu=2)$, $n=1000$.}
\label{fig:bias_variance}
\end{figure}

\subsubsection{Separating $\gamma$ and $\rho$: Burr Distribution}

The Student-t distribution satisfies $\rho = -2\gamma$, so the effects of $\gamma$ and $\rho$ on estimation accuracy cannot be disentangled. To address this, we use the Burr distribution, defined by $F(x) = 1 - (1 + x^{-\rho/\gamma})^{1/\rho}$ for $x > 0$, which allows $\gamma$ and $\rho$ to be varied independently.

Figure~\ref{fig:burr} presents MSE curves for six configurations. The top row fixes $\gamma = 0.5$ and varies $\rho \in \{-0.5, -1, -2\}$. When $\rho = -0.5$ (slow convergence to the extreme value limit), all three estimators perform similarly and the MSE is large. For $\rho = -1$, a clear separation emerges: ABM has the lowest MSE, followed by SBM and then BM. For $\rho = -2$ (fast convergence), the MSE is dominated by variance and all three estimators are close, with ABM maintaining a slight edge. The bottom row fixes $\rho = -1$ and varies $\gamma \in \{0.25, 0.5, 1\}$. ABM consistently outperforms BM and SBM across all values of $\gamma$. These results confirm that ABM's advantage is a variance-reduction effect that is robust across different $\gamma$ and $\rho$ combinations, and is most pronounced when $\rho$ is moderate (neither too close to zero nor too negative).

\begin{figure}[h!]
\centering
\subfloat[$\gamma=0.5$, $\rho=-0.5$]{\includegraphics[width=0.32\textwidth]{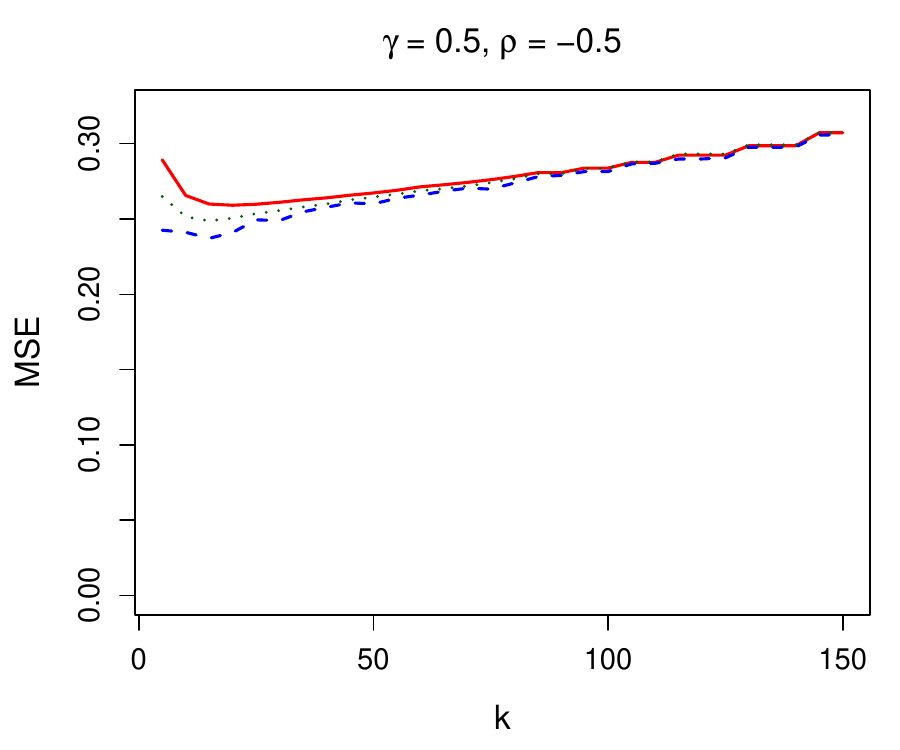}}\;
\subfloat[$\gamma=0.5$, $\rho=-1$]{\includegraphics[width=0.32\textwidth]{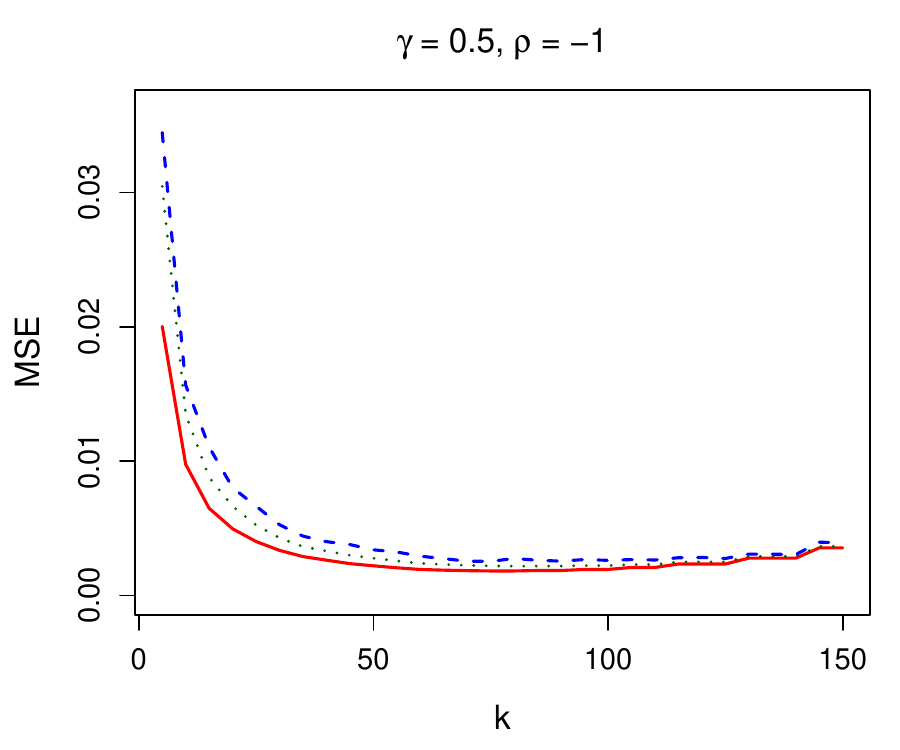}}\;
\subfloat[$\gamma=0.5$, $\rho=-2$]{\includegraphics[width=0.32\textwidth]{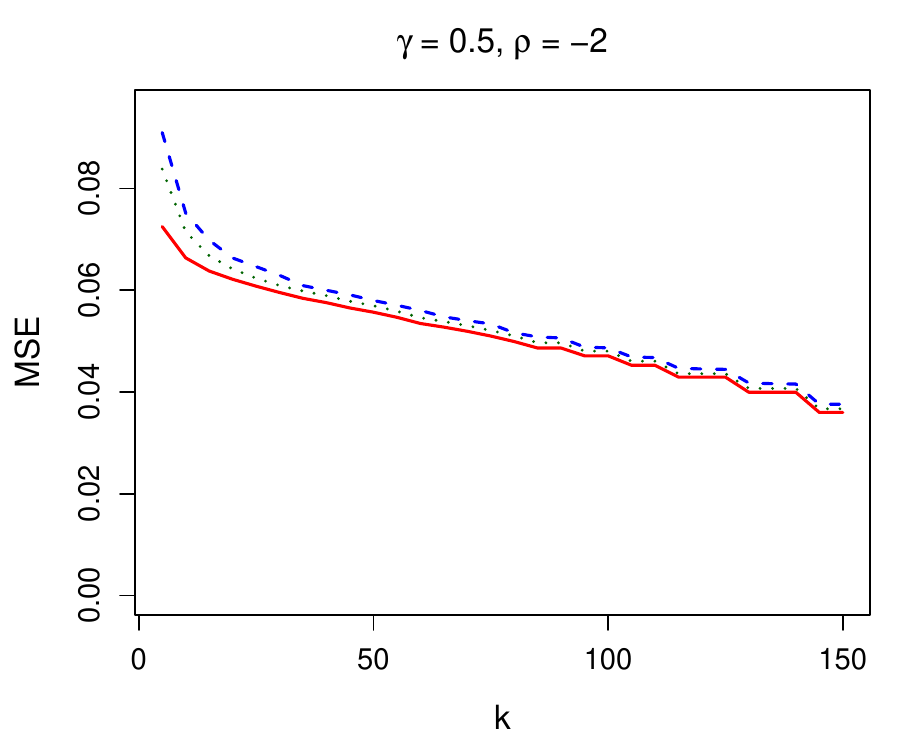}}\\
\subfloat[$\gamma=0.25$, $\rho=-1$]{\includegraphics[width=0.32\textwidth]{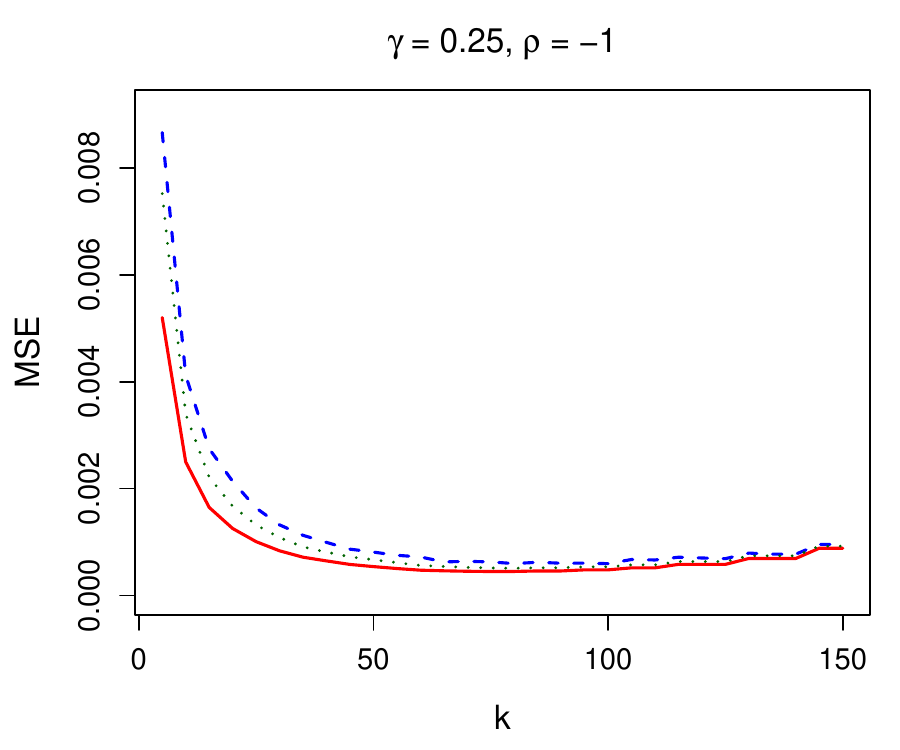}}\;
\subfloat[$\gamma=0.5$, $\rho=-1$]{\includegraphics[width=0.32\textwidth]{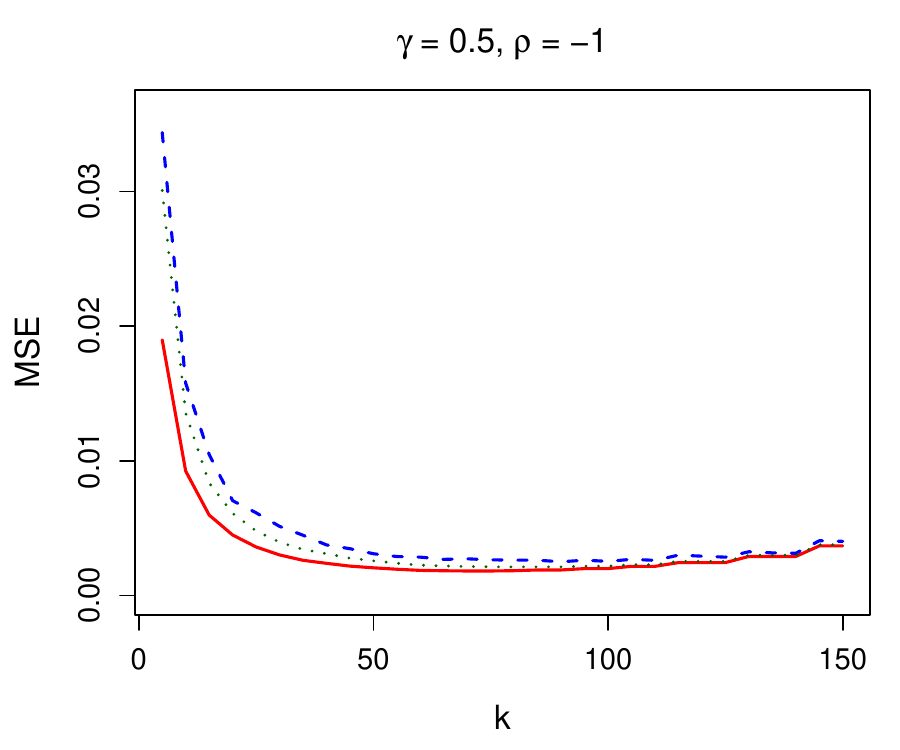}}\;
\subfloat[$\gamma=1$, $\rho=-1$]{\includegraphics[width=0.32\textwidth]{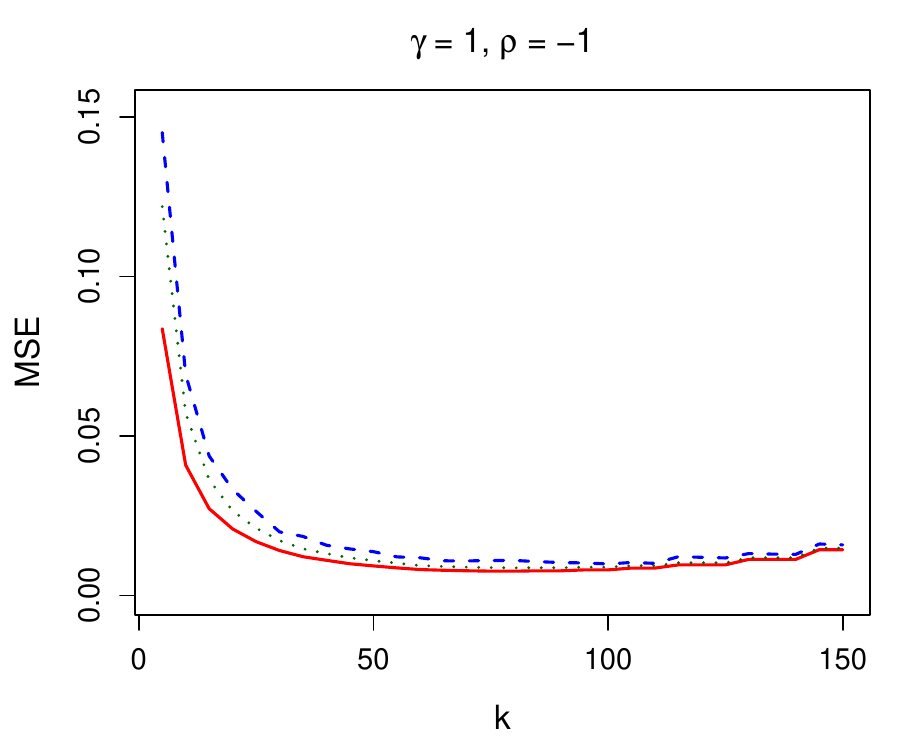}}
\caption{MSE for the Burr distribution. Top row: fixed $\gamma=0.5$, varying $\rho$. Bottom row: fixed $\rho=-1$, varying $\gamma$. Line styles as in Figure~\ref{fig:mse_studentt}.}
\label{fig:burr}
\end{figure}

\subsubsection{Fr\'{e}chet and Pareto Distributions}

Figure~\ref{fig:frechet_pareto} shows the MSE for the Fr\'{e}chet and Pareto distributions with $\gamma = 1/2$ and $n = 1000$. These distributions illustrate the case where the two second-order parameters $\rho$ and $\rho'$ diverge: for the Fr\'{e}chet distribution, $\rho = -1$ and $\rho' = -\infty$, while for the Pareto distribution, $\rho = -\infty$ and $\rho' = -1$. The ABM estimator performs at par with or better than BM and SBM across the entire range of $k$ for both distributions.

\begin{figure}[h!]
\centering
\subfloat[Fr\'{e}chet $(\gamma=1/2)$]{\includegraphics[width=0.49\textwidth]{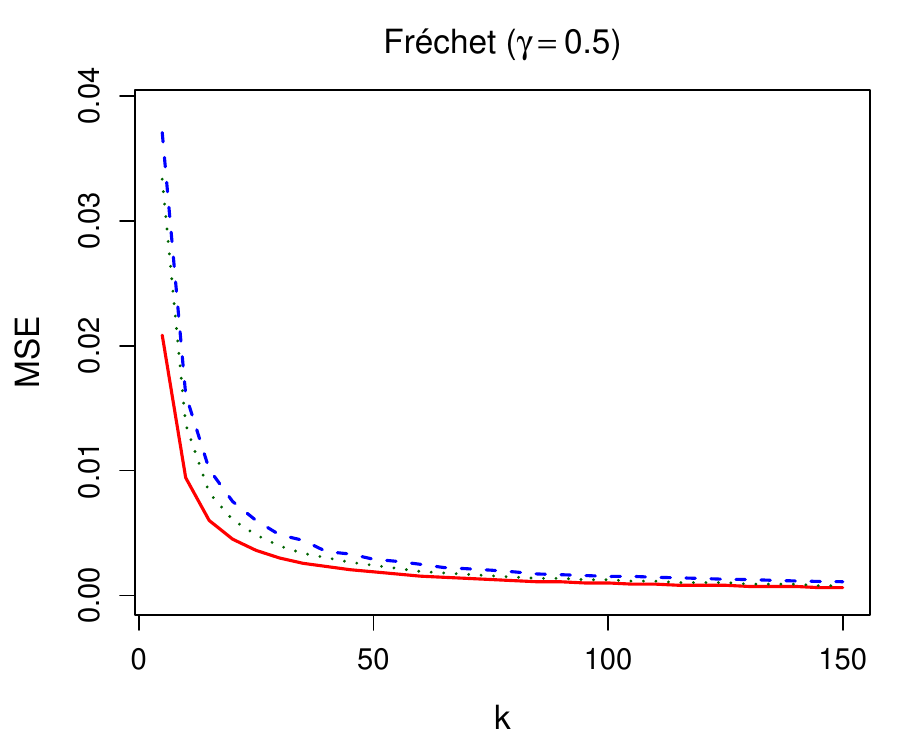}}\quad
\subfloat[Pareto $(\gamma=1/2)$]{\includegraphics[width=0.49\textwidth]{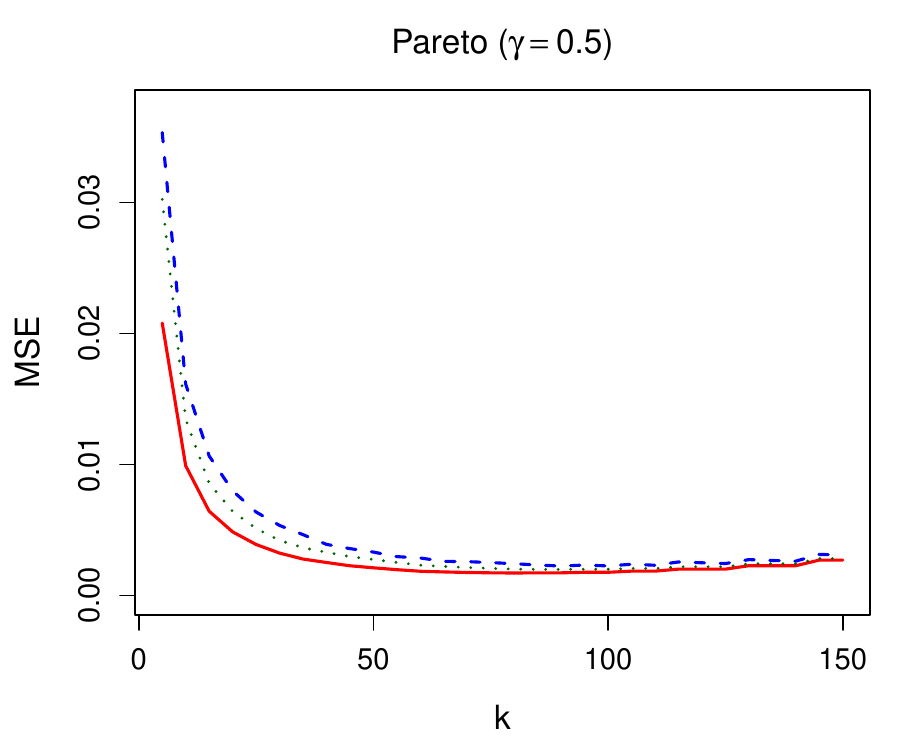}}
\caption{MSE for Fr\'{e}chet and Pareto distributions, $n=1000$. Line styles as in Figure~\ref{fig:mse_studentt}.}
\label{fig:frechet_pareto}
\end{figure}

\subsection{Non-I.I.D.\ Data Generating Processes}

Although Theorem~\ref{main_theorem} is proved for i.i.d.\ observations, we now explore the behaviour of the ABM estimator when the observations are serially dependent or not identically distributed.

\subsubsection{Linear Serial Dependence}

We introduce serial dependence through the AR(1) model
\begin{align}\label{model_dependence}
X_t = \phi X_{t-1} + \epsilon_t, \quad t \geq 2,
\end{align}
where $X_1 = \epsilon_1$ and $\{\epsilon_t\}$ is an i.i.d.\ sequence following the Student-t$(2)$ distribution. The parameter $\phi$ controls the degree of serial dependence. For this model, the extremal index is $\theta = 1 - \phi^4$ \citep{chernick1991calculating}. We consider $\phi \in \{0.1, 0.5, 0.9\}$, generate $2100$ observations, discard the first $100$, and use $n = 2000$ with truncation constant $c = 10^{-3}$.

Figure~\ref{fig:ar1} displays the MSE for the three estimators. For $\phi = 0.1$ (weak dependence), all estimators perform well at the optimal $k$, with ABM showing the smallest MSE. For $\phi = 0.5$, the BM and SBM estimators degrade rapidly for moderate to large $k$, while ABM remains stable over a wide range. For $\phi = 0.9$ (strong dependence), BM and SBM deteriorate severely even for small $k$, whereas the ABM estimator maintains moderate MSE throughout. This robustness of ABM under serial dependence can be explained by its permutation invariance: by discarding the order of the observations, ABM effectively treats the sample as if drawn from the stationary marginal distribution, which has the same extreme value index $\gamma$.

\begin{figure}[h!]
\centering
\subfloat[$\phi=0.1$]{\includegraphics[width=0.32\textwidth]{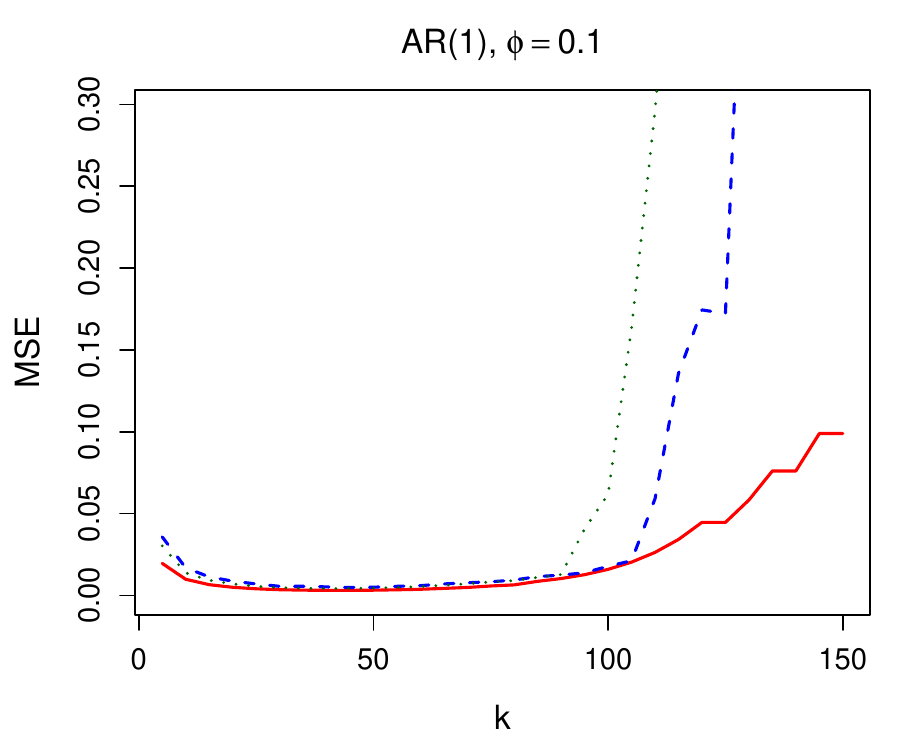}}\;
\subfloat[$\phi=0.5$]{\includegraphics[width=0.32\textwidth]{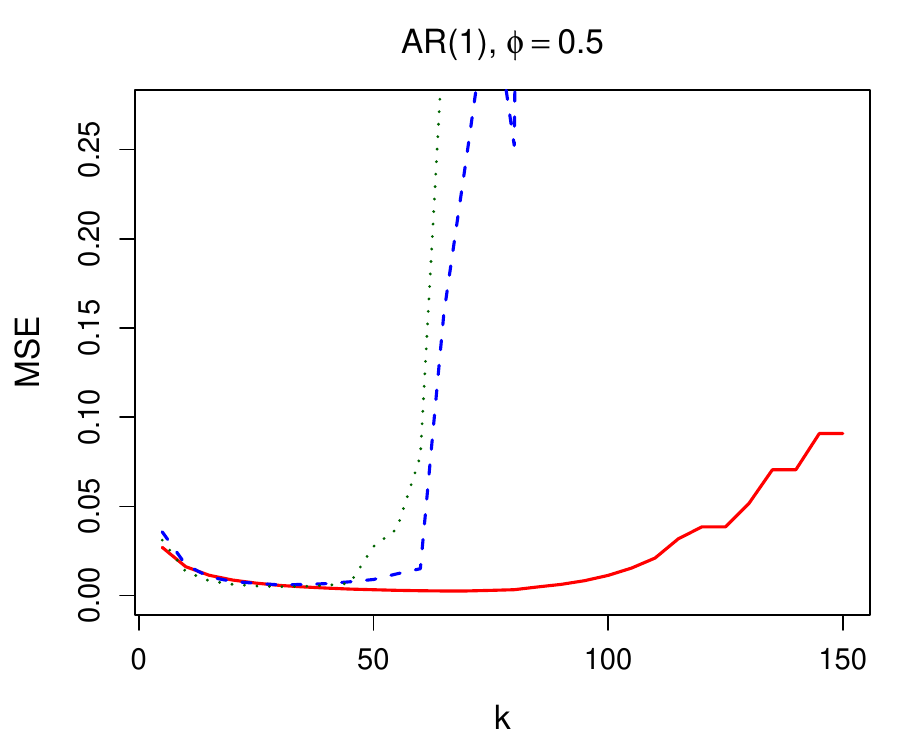}}\;
\subfloat[$\phi=0.9$]{\includegraphics[width=0.32\textwidth]{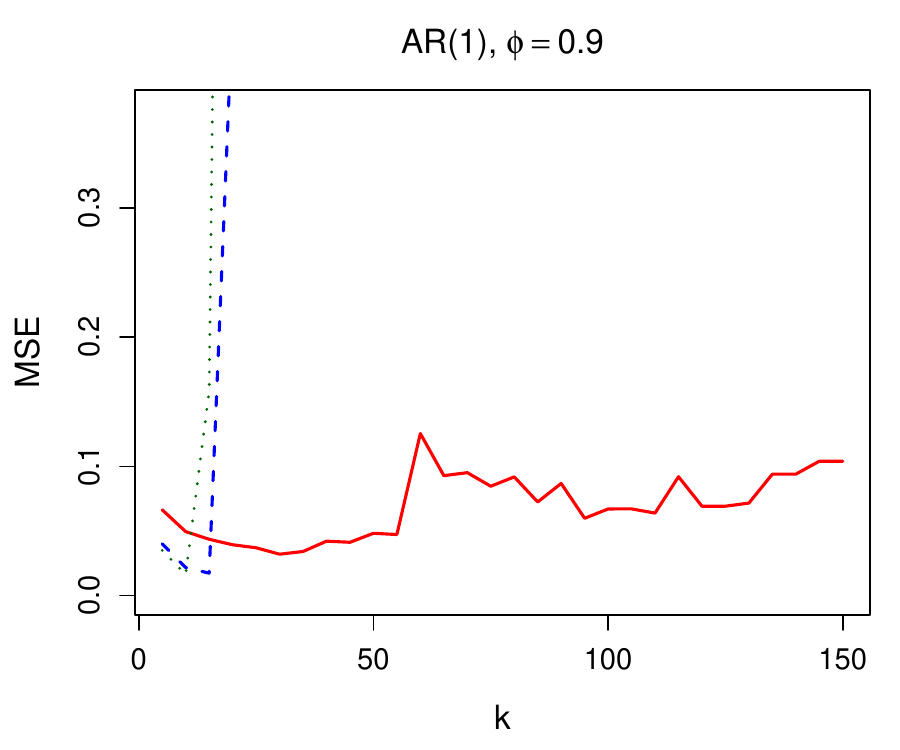}}
\caption{MSE under AR(1) serial dependence, $n=2000$. Line styles as in Figure~\ref{fig:mse_studentt}.}
\label{fig:ar1}
\end{figure}

\subsubsection{GARCH Dependence}

We simulate observations from a GARCH$(1,1)$ model:
\begin{align*}
X_t = \sigma_t \epsilon_t, \quad \sigma_t^2 = \lambda_0 + \lambda_1 X_{t-1}^2 + \lambda_2 \sigma_{t-1}^2,
\end{align*}
where $\lambda_1 + \lambda_2 < 1$ for stationarity and $\epsilon_t$ is i.i.d.\ standardized Student-t$(6)$ (with unit variance). We generate $2100$ observations, discard the first $100$, and use $n = 2000$ with $c = 10^{-3}$. Following \cite{kesten1973random}, the tail index satisfies $\gamma = 1/(2\kappa)$ where $\kappa$ solves $\E(\lambda_1 \epsilon_t^2 + \lambda_2)^\kappa = 1$. With $\lambda_0 = 0.5$, the combinations $\{\lambda_1 = 0.08, \lambda_2 = 0.91\}$ and $\{\lambda_1 = 0.11, \lambda_2 = 0.88\}$ yield $\gamma \approx 0.29$ and $\gamma \approx 0.35$, respectively.

Figure~\ref{fig:garch} shows the MSE for $k \leq 80$. For both parameter configurations, the ABM estimator achieves the lowest MSE at the optimal $k$ (around $k = 15$--$30$). For larger $k$, all estimators degrade due to bias, but ABM degrades more quickly because it assigns weight to order statistics that are affected by the time-varying volatility. We note that, as in the AR(1) case, the ABM estimator for $\gamma$ remains valid under serial dependence because $\gamma$ is a property of the stationary marginal distribution. However, the ABM scale estimator $\hat{\sigma}_n$ is not a valid estimator for the normalization constant of block maxima under dependence; estimating return levels would require a separate extremal index correction.

\begin{figure}[h!]
\centering
\subfloat[$\lambda_1=0.08$, $\lambda_2=0.91$]{\includegraphics[width=0.49\textwidth]{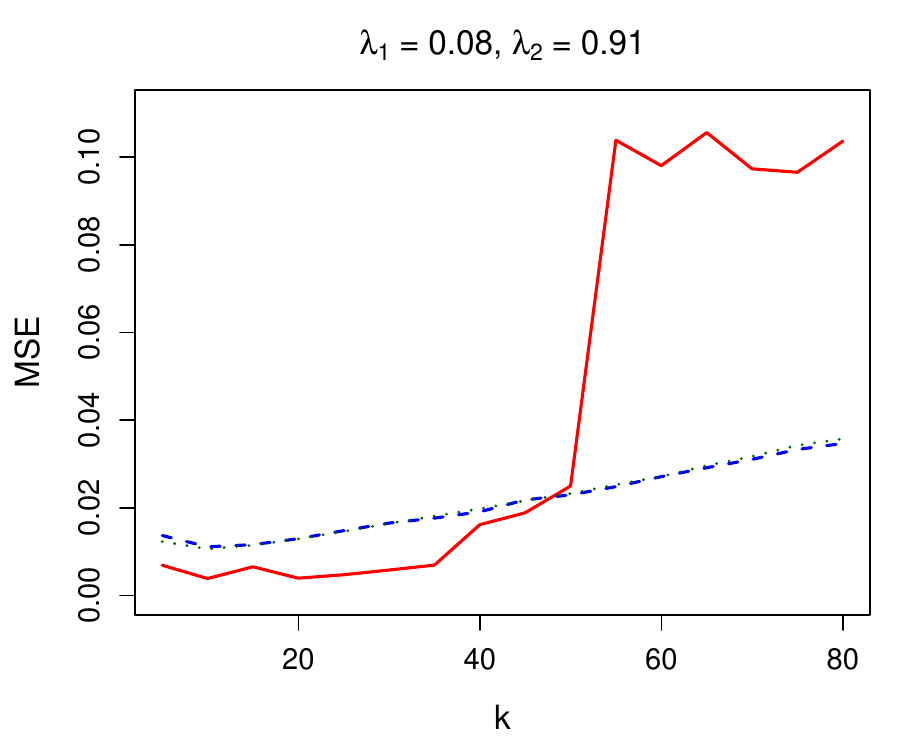}}\quad
\subfloat[$\lambda_1=0.11$, $\lambda_2=0.88$]{\includegraphics[width=0.49\textwidth]{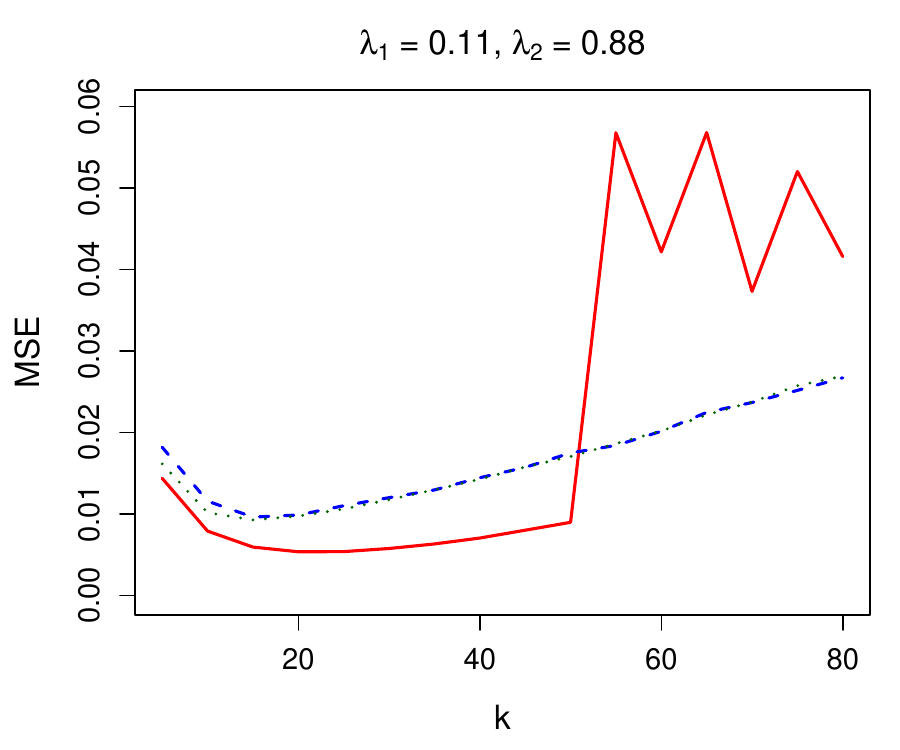}}
\caption{MSE under GARCH$(1,1)$ dependence, $n=2000$, $k \leq 80$. Line styles as in Figure~\ref{fig:mse_studentt}.}
\label{fig:garch}
\end{figure}

\subsubsection{Scale Heterogeneity}

We consider two models of scale heterogeneity with $\{\epsilon_t\}$ i.i.d.\ Student-t$(2)$ and $n = 1000$. The \emph{block heterogeneity} model assigns scale $r$ to the second half of the sample:
\begin{align*}
X_t = \begin{cases}
|\epsilon_t| & \text{if } t \leq n/2, \\
r\, |\epsilon_t| & \text{if } t > n/2.
\end{cases}
\end{align*}
The \emph{alternating heterogeneity} model assigns scales based on parity:
\begin{align*}
X_t = \begin{cases}
|\epsilon_t| & \text{if } t \text{ is odd}, \\
r\, |\epsilon_t| & \text{if } t \text{ is even}.
\end{cases}
\end{align*}
We use $r \in \{2, 5\}$ and truncation constant $c = 10^{-3}$.

Figure~\ref{fig:scale_hetero} displays the results. Under block heterogeneity (top row), the ABM estimator strongly outperforms both BM and SBM, and the advantage is more pronounced for larger $r$. The permutation invariance of ABM is beneficial here: by treating all observations equally regardless of their position, ABM effectively pools information from both scale regimes.

Under alternating heterogeneity (bottom row), the picture changes. For $r = 2$, the three estimators perform comparably, with BM achieving the lowest MSE for moderate $k$. For $r = 5$, BM clearly outperforms ABM. This is because with alternating scales, each disjoint block naturally contains observations from both scale regimes, resulting in block maxima that are well-behaved. In contrast, ABM's permutation invariance provides no additional benefit here, and its inclusion of lower order statistics introduces extra bias. This comparison shows that ABM's advantage under heterogeneity depends on the structure of the scale variation: ABM excels when scale changes are concentrated in blocks, but not when they alternate at fine resolution.

\begin{figure}[h!]
\centering
\subfloat[Block, $r=2$]{\includegraphics[width=0.49\textwidth]{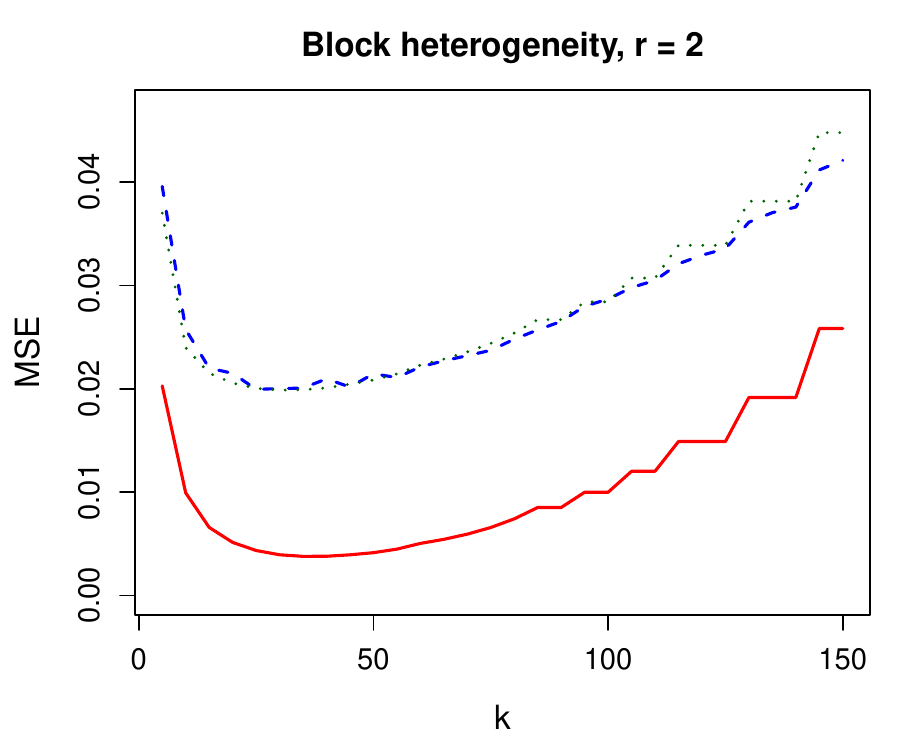}}\quad
\subfloat[Block, $r=5$]{\includegraphics[width=0.49\textwidth]{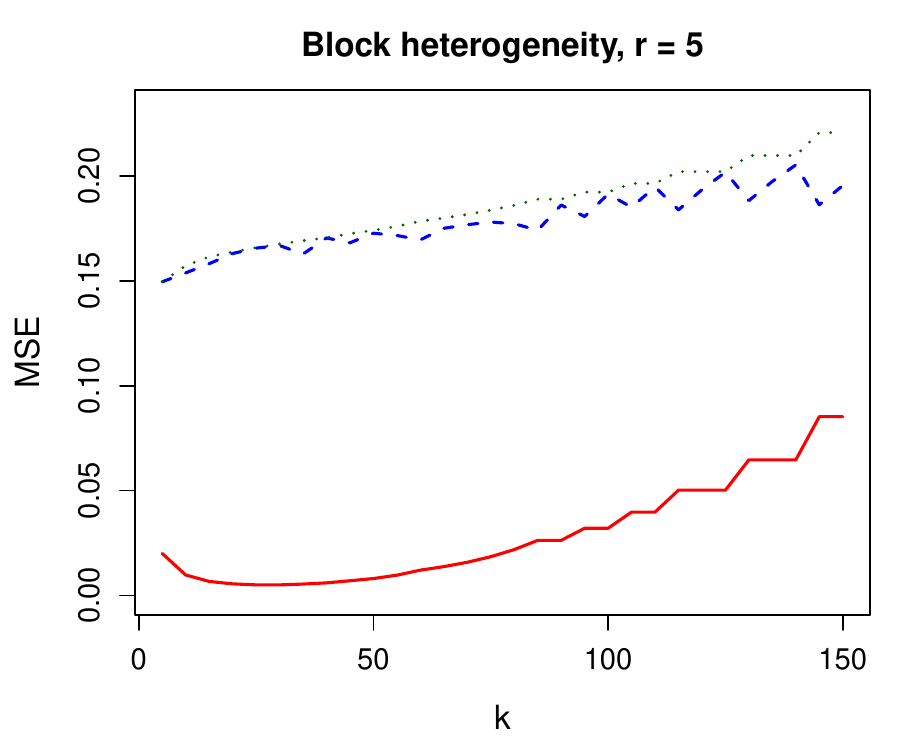}}\\
\subfloat[Alternating, $r=2$]{\includegraphics[width=0.49\textwidth]{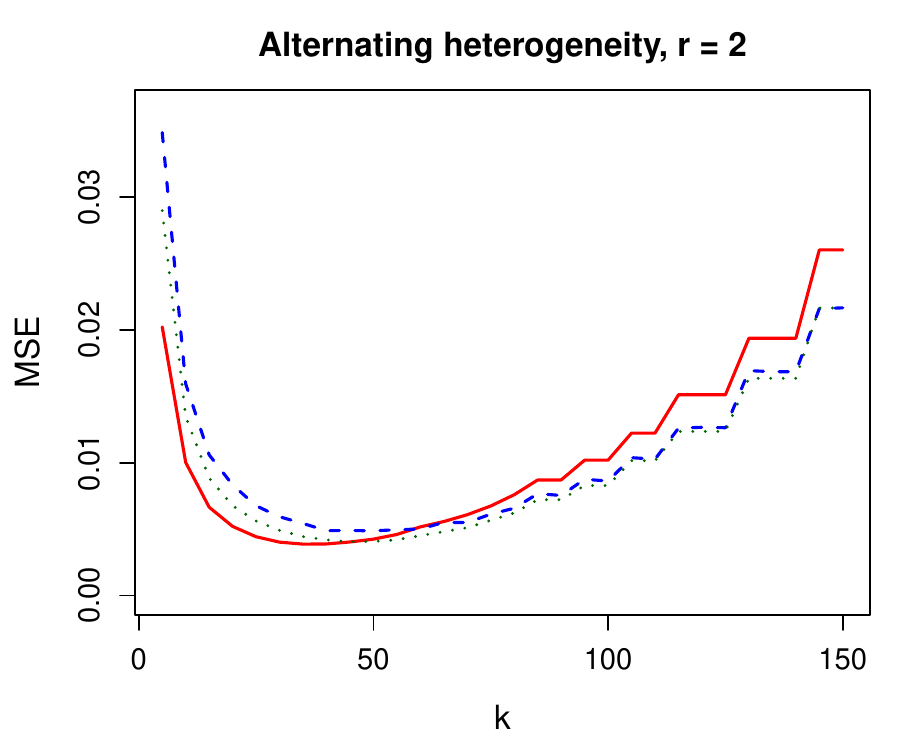}}\quad
\subfloat[Alternating, $r=5$]{\includegraphics[width=0.49\textwidth]{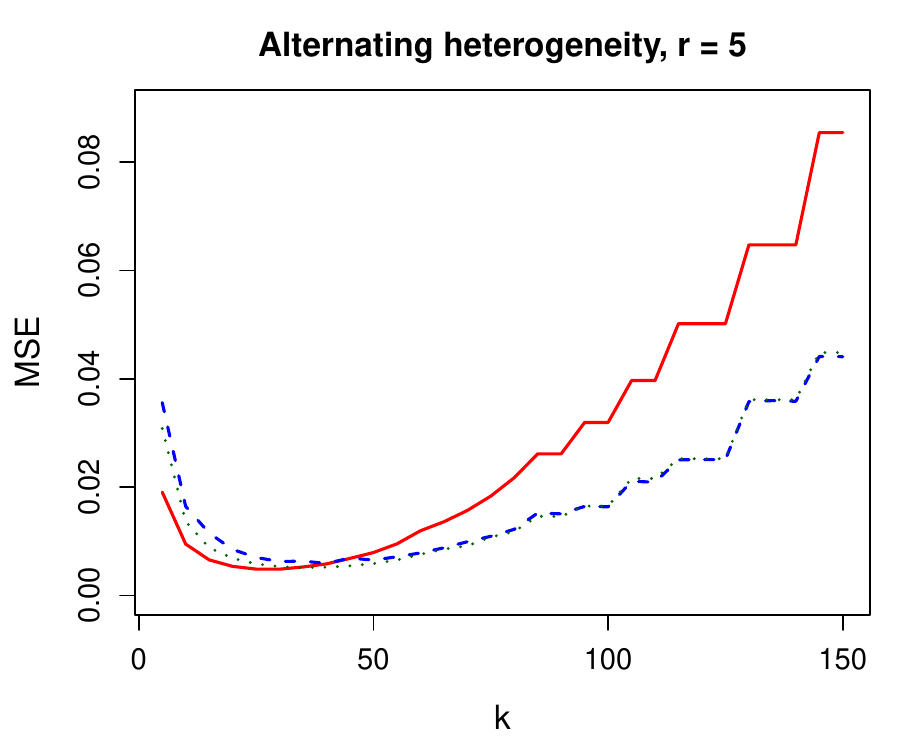}}
\caption{MSE under scale heterogeneity, $n=1000$. Top: block heterogeneity. Bottom: alternating heterogeneity. Line styles as in Figure~\ref{fig:mse_studentt}.}
\label{fig:scale_hetero}
\end{figure}

\section{Proofs}\label{Proofs}
\subsection{Preliminaries}
Recall that we use the weights $p_i=\binom{n-i}{m-1}/\binom{n}{m}$ to reweigh the truncated order statistic $X^c_{n-i+1:n}$. The following lemma shows that these weights are uniformly close to $q_i=\frac{1}{k}e^{-(i-1)/k}$.

\begin{lemma}\label{weights}
Under Condition \ref{n,k,m_condition}  we have that for any $d>0$
\begin{equation}\label{binomial_weights}
\frac{p_{i}}{\frac{1}{k}e^{-(i-1)/k}}-1=(\log k)^{4d}\suit{\frac{1}{k}+\frac{1}{m}}O(1),
\end{equation}
holds uniformly for $1\leq i\leq k (\log k)^{d}$, as $n \to \infty$
\end{lemma}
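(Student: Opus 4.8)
The plan is to pass to logarithms, since the ratio $p_i/q_i$ factors into a product that simplifies cleanly. First I would record the exact product representation of the weights. Writing out the binomial coefficients gives $p_1=m/n=1/k$ and, more generally,
\[
p_i=\frac{m}{n}\prod_{l=1}^{i-1}\left(1-\frac{m-1}{n-l}\right),
\]
which one checks directly from $\binom{n-i}{m-1}/\binom{n}{m}$ by cancelling factorials (equivalently, from the successive ratio $p_{i+1}/p_i=(n-i-m+1)/(n-i)$). Since $q_i=\frac1k e^{-(i-1)/k}$ and $m/n=1/k$, this yields the clean identity
\[
\frac{p_i}{q_i}=e^{(i-1)/k}\prod_{l=1}^{i-1}\left(1-\frac{m-1}{n-l}\right),\qquad \log\frac{p_i}{q_i}=\frac{i-1}{k}+\sum_{l=1}^{i-1}\log\left(1-u_l\right),\quad u_l:=\frac{m-1}{n-l}.
\]
The lemma then reduces to showing that $\log(p_i/q_i)$ is uniformly of the stated order, after which exponentiation finishes the job.

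The crux is the cancellation of the leading $(i-1)/k$ term against the sum of the $u_l$. Naively each $u_l\approx 1/k$ and there are up to $k(\log k)^d$ of them, so their sum is of order $(\log k)^d$ --- nowhere near small; only the cancellation saves us. I would make it explicit by computing
\[
u_l-\frac1k=\frac{m-1}{n-l}-\frac{m}{n}=\frac{ml-n}{n(n-l)}.
\]
Using Condition \ref{n,k,m_condition} (which forces $k\to\infty$ and, since $k=O(n^l)$ with $l<1$, makes $m$ grow polynomially while $(\log k)^d$ grows only polylogarithmically, so that $(\log k)^d/m\to 0$), I would bound, uniformly over $1\le l\le i-1\le k(\log k)^d$, the numerator by $|ml-n|\le n(\log k)^d$ and the denominator by $n(n-l)\ge n^2/2$. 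Hence $|u_l-1/k|\le 2(\log k)^d/n$, and summing at most $k(\log k)^d$ terms gives
\[
\left|\sum_{l=1}^{i-1}u_l-\frac{i-1}{k}\right|\le\frac{2k(\log k)^{2d}}{n}=\frac{2(\log k)^{2d}}{m}=(\log k)^{2d}\,O(1/m),
\]
uniformly in $i$.

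It then remains to control the quadratic and higher terms of the logarithm and to exponentiate. Since $u_l\le 2/k\le 1/2$ for large $k$, the elementary bound $|\log(1-u)+u|\le u^2$ gives $\sum_l|\log(1-u_l)+u_l|\le\sum_l u_l^2\le k(\log k)^d\,(2/k)^2=(\log k)^d\,O(1/k)$. Combining the two estimates yields $\log(p_i/q_i)=(\log k)^{2d}\,O(1/k+1/m)$ uniformly, which in particular tends to $0$; applying $e^x-1=x(1+o(1))$ then gives $p_i/q_i-1=(\log k)^{2d}\,O(1/k+1/m)$. This is in fact sharper than the claimed bound with $(\log k)^{4d}$, so the lemma follows a fortiori. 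The main obstacle lies entirely in the second paragraph: securing the cancellation with an error that is uniform over the full range $1\le i\le k(\log k)^d$, rather than merely for fixed or slowly growing $i$; everything else is routine Taylor estimation.
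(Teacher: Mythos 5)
Your proof is correct and follows essentially the same route as the paper's: write $p_i$ as an exact product over the successive ratios, pass to logarithms, and exploit the cancellation between the linear part of $\sum_l \log(1-u_l)$ and $(i-1)/k$ before controlling the quadratic remainder and exponentiating. Your bookkeeping (comparing $\sum_l u_l$ directly with $(i-1)/k$ rather than splitting off the factor $((n-m)/(n-1))^{i-1}$ as the paper does) is marginally cleaner and yields the slightly sharper exponent $(\log k)^{2d}$, which implies the stated bound a fortiori.
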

\begin{proof}
Clearly, the lemma holds for $i=1$. For $i\geq 2$, we can write the binomial weight $p_{i}$ as
$$
p_i=\frac{1}{k} \left(\frac{n-m}{n-1}\right)^{i-1} \prod_{j=1}^{i-2}\frac{1-\frac{j}{n-m}}{1-\frac{j}{n-1}},
$$
which leads to
$$\log\frac{p_{i}}{\frac{1}{k}e^{-(i-1)/k}}=-(i-1)\suit{\frac{1}{k}-\log\frac{n-m}{n-1}}+\sum_{j=1}^{i-2}\log\frac{1-\frac{j}{n-m}}{1-\frac{j}{n-1}}=:I_1+I_2.$$

For $I_1$, clearly, as $n\to\infty$, uniformly for all $i\leq k(\log k)^d$
\begin{align*}
\abs{I_1}  \leq k(\log k)^d \suit{\frac{1}{k}-\log \suit{1-\frac{1-m}{n-1}}}=\frac{(\log k)^d}{k}O(1).
\end{align*}

Next we handle $I_2$. By Condition \ref{n,k,m_condition}, for all $j<i\leq k (\log k)^{d}$ we have uniformly, $j/(n-1)\leq j/(n-m)=\frac{(\log k)^{d}}{m} O(1) \to 0$ as  $n\to\infty$. Thus
\begin{align*}
&\log\frac{1-\frac{j}{n-m}}{1-\frac{j}{n-1}}\\
=&-\left(\frac{j}{n-m}\right)-\left(\frac{j}{n-m}\right)^2+O\left(\left(\frac{j}{n-m}\right)^{3}\right)+\left(\frac{j}{n-1}\right)+\left(\frac{j}{n-1}\right)^{2}+O \left(\left(\frac{j}{n-1}\right)^{3}\right)\\
=&-\frac{j(m-1)}{(n-1)(n-m)}-\frac{j(m-1)}{(n-1)(n-m)}\suit{\frac{j}{n-m}+\frac{j}{n-1}}+\suit{\frac{j}{n}}^3O(1)\\
=&\frac{j}{nk}O(1)+\suit{\frac{j}{n}}^3O(1),
\end{align*}
where the $O(1)$ terms are uniform for all $j<i\leq k (\log k)^{d}$ as $n\to\infty$. By aggregating these terms we get that, uniform for all $i\leq k (\log k)^{d}$ as $n\to\infty$,
\begin{align*}
I_2=&\sum_{j=1}^{i-2}\log\frac{1-\frac{j}{n-m}}{1-\frac{j}{n-1}}=\sum_{j=1}^{i-2}\suit{\frac{j}{nk}O(1)+\suit{\frac{j}{n}}^3O(1)}\\
=&\frac{1}{nk}(k(\log k)^d)^2O(1)+\frac{(k(\log k)^d)^4}{n^3}O(1)\\
=&\frac{(\log k)^{2d}}{m}O(1)+\frac{k(\log k)^{4d}}{m^3}O(1)
\end{align*}
By Condition \ref{n,k,m_condition}, we get that $k=o(n^{2/3})$, which implies that $k/m^2\to 0$ as $n\to\infty$. Hence, as $n\to\infty$, $I_2=\frac{(\log k)^{4d}}{m}O(1)$. The lemma is proved by combining $I_1$ and $I_2$.
\end{proof}
In the proof of Proposition \ref{empirical_distribution}, we need to replace the deterministic term $\frac{n}{k}\left(1-F(x\sigma_m)\right)$ by its limit $x^{-1/\gamma}$. The following lemma shows that such a replacement is allowed for a wide range of $x$ uniformly. 

\begin{lemma}\label{diff_ratio}
Assume that conditions \ref{SOC_condition} and \ref{n,k,m_condition} hold for some $\rho< 0$. For any $\delta>0$, there exists $c_1=c_1(\delta,\rho,\gamma, \epsilon)>0$  such that as $n \to \infty$,
\begin{align}\label{replace_all}
\sup_{x> \left( \log k \right)^{-\delta}} x^{\frac{1/2-\epsilon}{\gamma}} \left| \frac{n}{k}\left(1 - F \left(\sigma_m x \right)\right) -x^{-1/\gamma} \right| = O\left(a( \sigma_m) (\log k)^{c_1}\right) \to 0
\end{align}
In addition, there exists $c_3=c_3(\delta,\rho,\gamma)>0$ such that as $n \to \infty$,
\begin{align}\label{replace_rho<0}
\sup_{x> \left( \log k \right)^{-\delta}} \left|\frac{\frac{n}{k}\left(1-F(x\sigma_m)\right)}{x^{-1/\gamma}} - 1 \right| = O\left(a(\sigma_m) (\log k)^{c_{3}} \right) \to 0.
\end{align}
\end{lemma}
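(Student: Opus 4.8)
The plan is to center the second order condition at the high threshold $t=\sigma_m$ and to exploit the uniform inequality \eqref{uniform_ineq}, reducing everything to a careful accounting of powers of $\log k$ near the lower end of the range of $x$. Writing $\frac{n}{k}=m$, I would decompose
\begin{align*}
\frac{n}{k}\suit{1-F(\sigma_m x)}-x^{-1/\gamma}
=m\suit{1-F(\sigma_m)}\suit{\frac{1-F(\sigma_m x)}{1-F(\sigma_m)}-x^{-1/\gamma}}
+x^{-1/\gamma}\suit{m\suit{1-F(\sigma_m)}-1}.
\end{align*}
By definition of $\sigma_m=F^{\leftarrow}(1-1/m)$ the factor $m(1-F(\sigma_m))$ tends to $1$ (and equals $1$ when $F$ is continuous), so the second summand is negligible relative to the target rate; the main contribution is the first summand, which is governed by the tail ratio $\frac{1-F(\sigma_m x)}{1-F(\sigma_m)}$.

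First I would apply \eqref{uniform_ineq} with threshold $t=\sigma_m$ and with its free parameters fixed at some small $\epsilon_0,\eta>0$. This requires $\sigma_m x\geq t_0$ uniformly over $x>(\log k)^{-\delta}$. Since $\gamma>0$ forces $\sigma_m=F^{\leftarrow}(1-1/m)$ to be regularly varying in $m$ with positive index $\gamma$, it grows like a power of $m=n/k\to\infty$, whereas $(\log k)^{-\delta}$ decays only poly-logarithmically; hence $\sigma_m(\log k)^{-\delta}\to\infty$ and, for $n$ large, the inequality applies over the whole range, yielding uniformly
\begin{align*}
\abs{\frac{1-F(\sigma_m x)}{1-F(\sigma_m)}-x^{-1/\gamma}}
\leq a(\sigma_m)\suit{\abs{x^{-1/\gamma}\frac{x^{\rho/\gamma}-1}{\rho\gamma}}+\epsilon_0\, x^{-1/\gamma+\rho/\gamma}\max(x^{\eta},x^{-\eta})}.
\end{align*}

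Next I would multiply by $x^{(1/2-\epsilon)/\gamma}$ and take the supremum. For large $x$ every term carries a strictly negative power of $x$ (choosing $\eta<(1/2+\epsilon-\rho)/\gamma$ also kills the $\max$ term), so the expression vanishes as $x\to\infty$ and the supremum is attained near the lower endpoint $x=(\log k)^{-\delta}$. Because $\rho<0$, the leading piece $x^{(1/2-\epsilon)/\gamma}x^{-1/\gamma}x^{\rho/\gamma}=x^{(\rho-1/2-\epsilon)/\gamma}$ has a negative exponent and, evaluated at $x=(\log k)^{-\delta}$, equals $(\log k)^{\delta(1/2+\epsilon-\rho)/\gamma}$; collecting this together with the analogous contribution of the $\max$ remainder gives the bound $O\suit{a(\sigma_m)(\log k)^{c_1}}$ with $c_1=\delta(1/2+\epsilon-\rho)/\gamma+\eta\delta/\gamma$. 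Remark \ref{remark_condition3.3} then shows $a(\sigma_m)$ decays like a negative power of $m$ while $(\log k)^{c_1}$ grows only poly-logarithmically, so the product tends to $0$. The second display \eqref{replace_rho<0} follows from the same estimate after dividing through by $x^{-1/\gamma}$: this removes the factor $x^{-1/\gamma}$ throughout, the supremum is again taken at $x=(\log k)^{-\delta}$, and the dominant surviving term $x^{\rho/\gamma}$ contributes $(\log k)^{\delta|\rho|/\gamma}$, giving $c_3=\delta|\rho|/\gamma+\eta\delta/\gamma$.

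The main obstacle is precisely the bookkeeping in the last step: since $x$ is allowed to approach $0$ at the rate $(\log k)^{-\delta}$, each negative power of $x$ appearing in the first-order SOC term and in the $\max(x^{\eta},x^{-\eta})$ remainder is converted into a positive power of $\log k$, and one must check that the worst such power, after multiplication by $a(\sigma_m)$, still vanishes. This is exactly where Condition \ref{n,k,m_condition} enters through Remark \ref{remark_condition3.3} (which gives $a(\sigma_m)=(n/k)^{\rho+o(1)}$), guaranteeing that the polynomial decay of $a(\sigma_m)$ dominates any fixed poly-logarithmic factor.
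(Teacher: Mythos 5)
Your proposal is correct and follows essentially the same route as the paper: apply the uniform second-order inequality \eqref{uniform_ineq} at the threshold $t=\sigma_m$ (valid over the whole range because $\sigma_m(\log k)^{-\delta}\to\infty$), choose the free parameter $\eta$ (the paper's $\delta'$) small enough to control both ends of the range, convert the negative powers of $x$ at the endpoint $x=(\log k)^{-\delta}$ into the poly-logarithmic factor $(\log k)^{c_1}$, and invoke Remark \ref{remark_condition3.3} for the convergence to zero. The only difference is your explicit isolation of the term $x^{-1/\gamma}\left(m(1-F(\sigma_m))-1\right)$, which the paper implicitly treats as zero by identifying $1-F(\sigma_m)$ with $1/m$; this is a harmless refinement rather than a different argument.
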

\begin{proof} Convergence of the right hand sides of \eqref{replace_all}-\eqref{replace_rho<0} to zero follows by Condition \ref{n,k,m_condition}. Thus, we only show the statements regarding the order.

Recall the inequality in \eqref{uniform_ineq}. Since $\sigma_m \left( \log k \right)^{-\delta} \to \infty$ as $n\to\infty$, we get that for any $\delta, \delta', \epsilon>0$, there exists $n(\delta,\delta',\epsilon)$ such that for all $n>n(\delta,\delta',\epsilon)$ and $ x>\left( \log k \right)^{-\delta}$,
\begin{align}\label{inequality_uniform}
\left| \frac{\frac{n}{k}\left(1-F(x \sigma_m)\right)}{x^{-1/\gamma}} - 1\right| \leq \abs{ a(\sigma_m)} \left(\epsilon{ x}^{\rho/\gamma} \max\left({ x}^{\delta'}, { x}^{-\delta'}\right)+  \frac{x^{\rho/\gamma}-1}{\rho\gamma}\right).
\end{align}

From \eqref{inequality_uniform}, we obtain that for all $n>n(\delta,\delta',\epsilon)$ and $x> (\log k)^{-\delta}$,
\begin{align*}
x^{\frac{1/2-\epsilon}{\gamma}} \left| \frac{n}{k}\left(1-F(x\sigma_m) \right) -x^{-1/\gamma} \right| \leq \abs{a(\sigma_m)}x^{\frac{-1/2+\epsilon}{\gamma}}\left(\epsilon{ x}^{\rho/\gamma} \max\left({ x}^{\delta'}, { x}^{-\delta'}\right)+  \frac{x^{\rho/\gamma}-1}{\rho\gamma}\right).
\end{align*}

By choosing $\delta'$ such that $\delta'<(-\rho+1/2+\epsilon)/\gamma$ we get that, for all $x>(\log k)^{-\delta}$,
$$x^{-\frac{1/2+\epsilon}{\gamma}}\left(\epsilon{ x}^{\rho/\gamma} \max\left({ x}^{\delta'}, { x}^{-\delta'}\right)+  \frac{x^{\rho/\gamma}-1}{\rho\gamma}\right)<(\log k)^{c_1},$$
with $c_1>2\delta(-\rho+1/2+\epsilon)/\gamma$, which completes the proof of \eqref{replace_all}. 

Equation \eqref{replace_rho<0} is proved in a similar way by directly handling the right hand side of \eqref{inequality_uniform}: now choose $\delta'<-\rho /\gamma$ and $c_3>-2 \delta\rho/\gamma$.

\end{proof}

Finally, to prove Proposition \ref{empirical_distribution} we extend Theorem 5.1.4 in \cite{deHaanFerreira2006extreme} as in the lemma presented below. The extension concerns two different aspects. Firstly, in contrast to a fixed lower bound on the range of $x$, we allow the lower bound  to go to zero at a logarithmic speed. Secondly, we have an additional rate $k^{\epsilon^*}$ in the limit result.
\begin{lemma}\label{extended_process}
Under Conditions 3.1, \ref{n,k,m_condition} for any $\delta>0$ and any $0<\epsilon<1/2$ there exists $\epsilon^*=\epsilon^*(l, \rho, \epsilon)$  such that as $n \to \infty$,
\begin{align}
\sup_{x>\left( \log k \right)^{-\delta}}k^{\epsilon^*}x^{\frac{1/2-\epsilon}{\gamma}} \left|\sqrt{k} \left( \frac{n}{k} \left(1 -F_n(\sigma_m x)\right) -x^{-1/\gamma}\right) - W_{n} \left(x^{-1/\gamma}\right)\right| \overset{P}{\to} 0,
\end{align}
where $F_{n}(x)= \frac{1}{n}\sum_{i=1}^{n}\mathbb{I}\{X_{i}\leq x\}$.
\end{lemma}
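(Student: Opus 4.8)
The plan is to reduce the statement to a weighted strong (Hungarian) approximation for the ordinary uniform empirical process, and then to read off the extra polynomial factor $k^{\epsilon^*}$ and the enlarged range from the polynomial rate in that approximation, in combination with Lemma \ref{diff_ratio}. Write $t_n(x):=1-F(\sigma_m x)$ and let $U_i:=1-F(X_i)$ be i.i.d.\ uniform on $(0,1)$, with empirical distribution function $\Gamma_n$. The probability integral transform gives $\frac{n}{k}(1-F_n(\sigma_m x))=\frac{n}{k}\Gamma_n(t_n(x))$. Setting $u=u_n(x):=\frac{n}{k}t_n(x)$, so that $t_n(x)=\frac{k}{n}u$, I would decompose
\begin{align*}
\sqrt{k}\Bigl(\tfrac{n}{k}\Gamma_n(t_n(x))-x^{-1/\gamma}\Bigr)-W_n(x^{-1/\gamma})
&=\Bigl[\sqrt{k}\bigl(\tfrac{n}{k}\Gamma_n(\tfrac{k}{n}u)-u\bigr)-W_n(u)\Bigr]\\
&\quad+\bigl[W_n(u)-W_n(x^{-1/\gamma})\bigr]+\sqrt{k}\bigl(u-x^{-1/\gamma}\bigr),
\end{align*}
so that the three bracketed terms isolate, respectively, the strong-approximation error, an increment of Brownian motion, and a purely deterministic bias. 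The target is to show that each term, multiplied by the weight $k^{\epsilon^*}x^{(1/2-\epsilon)/\gamma}=k^{\epsilon^*}u^{\epsilon-1/2}$ (using $u\approx x^{-1/\gamma}$), is $o_P(1)$ uniformly over $x>(\log k)^{-\delta}$, i.e.\ over $u\le(\log k)^{\delta/\gamma}$.

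The deterministic bias is the easiest. Taking the weighted supremum, equation \eqref{replace_all} of Lemma \ref{diff_ratio} bounds it by $k^{\epsilon^*}\sqrt{k}\,O\bigl(a(\sigma_m)(\log k)^{c_1}\bigr)$. By Remark \ref{remark_condition3.3} the function $a$ is $\rho/\gamma$-regularly varying, and Condition \ref{n,k,m_condition} holds with the \emph{strict} inequality $l<\max(\rho,-1)/(\max(\rho,-1)-1/2)$, which yields $\sqrt{k}\,a(\sigma_m)\to0$ with a genuine polynomial margin in $n$; hence for $\epsilon^*$ chosen small enough the factors $k^{\epsilon^*}(\log k)^{c_1}$ are absorbed and this term vanishes. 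For the strong-approximation term I would invoke the weighted approximation of the uniform empirical process underlying Theorem 5.1.4 in \cite{deHaanFerreira2006extreme}: for any $\nu\in[0,1/2)$ there exist Brownian motions approximating $\sqrt{n}(\Gamma_n(t)-t)$ with weighted error $O_P(n^{-\nu})t^{1/2-\nu}$, so that after the self-similar rescaling $t\mapsto\frac{k}{n}u$ (using $W_n(\tfrac{k}{n}u)\overset{d}{=}\sqrt{k/n}\,W_n(u)$) the uniform tail empirical process $\sqrt{k}\bigl(\frac{n}{k}\Gamma_n(\frac{k}{n}u)-u\bigr)$ is approximated by $W_n(u)$ with error $O_P(k^{-\nu})u^{1/2-\nu}$, uniformly in $u$. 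Choosing $\nu=\epsilon$, the factor $u^{1/2-\nu}$ cancels the weight $u^{\epsilon-1/2}$ exactly, so that the weighted approximation error is $O_P(k^{-\epsilon})$ uniformly; since the enlarged range costs only the logarithmic factor $(\log k)^{\delta/\gamma}$, multiplying by $k^{\epsilon^*}$ gives $O_P(k^{\epsilon^*-\epsilon})$, which vanishes once $\epsilon^*<\epsilon$. This is the crux of the extension: the embedding delivers a genuine polynomial rate $k^{-\epsilon}$ rather than the mere $o_P(1)$ of Theorem 5.1.4.

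The remaining Brownian increment $W_n(u)-W_n(x^{-1/\gamma})$ is controlled by a maximal inequality for Brownian motion over the interval between $u$ and $x^{-1/\gamma}$, whose length is $x^{-1/\gamma}\,O\bigl(a(\sigma_m)(\log k)^{c_3}\bigr)=o(x^{-1/\gamma})$ by \eqref{replace_rho<0}. Carrying no $\sqrt{k}$ factor, this term is negligible at polynomial rate after weighting, since $\sqrt{a(\sigma_m)}$ decays as a power of $n$ and dominates $k^{\epsilon^*}(\log k)^{\epsilon\delta/\gamma}$ for small $\epsilon^*$. Assembling the three bounds, every term is $o_P(k^{-\epsilon^*})$ uniformly, which is the claim.

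The main obstacle is the boundary region $x\to\infty$, i.e.\ $u=x^{-1/\gamma}\to0$, where the weight $u^{\epsilon-1/2}$ blows up while the number of order statistics above $\sigma_m x$ shrinks to a handful; this is exactly the regime in which a weighted empirical-process approximation is delicate and the reason the weight exponent must stay strictly below $1/2$. The weighted embedding is valid only down to $t\sim1/n$, i.e.\ $u\sim1/k$, so I would treat the sub-region $u\le u_0$ with $u_0\asymp 1/k$ separately: there $\#\{i:U_i<\frac{k}{n}u\}\le\#\{i:U_i<1/n\}=O_P(1)$, whence $\sqrt{k}\frac{n}{k}\Gamma_n(\frac{k}{n}u)=O_P(k^{-1/2})$ and $\sqrt{k}\,u=O_P(k^{-1/2})$, while $\sup_{u\le u_0}u^{\epsilon-1/2}|W_n(u)|=O_P(u_0^{\epsilon})=O_P(k^{-\epsilon})$ by the self-similarity of Brownian motion; weighting by $u^{\epsilon-1/2}\le k^{1/2-\epsilon}$ and $k^{\epsilon^*}$ again yields $O_P(k^{\epsilon^*-\epsilon})$. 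The genuinely technical part is then the exponent bookkeeping — choosing $\epsilon^*$, the embedding exponent $\nu=\epsilon$, and the auxiliary $\delta'$ of \eqref{uniform_ineq} consistently against $\rho$ and $l$ so that the bias, the embedding error, and the Brownian increment are simultaneously $o_P(k^{-\epsilon^*})$ uniformly over the slowly growing range.
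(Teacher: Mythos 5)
Your proposal follows essentially the same route as the paper: a weighted (Cs\"org\H{o}--Horv\'ath-type) strong approximation of the uniform empirical process with polynomial rate $O_P(n^{-\nu})t^{\nu-1/2}$, rescaled to the tail by $t\mapsto \frac{k}{n}u$, followed by the same three-term decomposition into approximation error, Brownian increment, and deterministic bias, each controlled via Lemma \ref{diff_ratio} and the margin $\sqrt{k}\,a(\sigma_m)\to 0$ with $\epsilon^*<\epsilon$. The only execution details the paper handles that you gloss over are that the underlying approximation is in terms of Brownian \emph{bridges}, so an extra drift term $\sqrt{k/n}\,z_n(x)\tilde W_n(1)$ must be shown negligible, and that the modulus-of-continuity bound for the Brownian increment must be applied on a slowly growing time interval (the paper uses a time-inversion $V_n(t)=tW_n(1/t)$ for the region where $x^{-1/\gamma}$ grows); neither affects the correctness of your argument.
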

\begin{proof}
We start from Corollary 3.2, Ch. $4.3$ in \cite{csorgo1993weighted}. Consider the uniform empirical distribution function, $U_{n}(t):=\frac{1}{n}\sum_{i=1}^n\mathbb{I}\{U_i\leq t\}$, where $U_1,U_2,\ldots$ are i.i.d. standard uniform random variables. Then there exists a sequence of Brownian Bridges $B_{n}$ such that for all $0<\nu <1/2$ as $n \to \infty$,
\begin{align*}
n^{1/2-\nu}  \sup_{0<t<1}\frac{\left|\sqrt{n}(U_{n}(t)-t)-B_{n}(t)\right|}{(t(1-t))^{\nu}}=O_{p}(1).
\end{align*}
By replacing $t$ with $t \frac{k}{n}$, we get that as $n \to \infty$,
\begin{align*}
k^{1/2-\nu}  \sup_{0<t<\frac{n}{k}} t^{-\nu}  \frac{\left|\sqrt{k}\left(\frac{n}{k}U_{n}\left(t\frac{k}{n}\right)-t\right)-\frac{\sqrt{n}}{\sqrt{k}}B_{n}\left(t\frac{k}{n}\right)\right|}{\left(1-t\frac{k}{n}\right)^{\nu}}=O_{p}\left(1\right).
\end{align*}
Let $z_{n}\left(x\right)= \frac{n}{k}\left(1-F\left(\sigma_mx\right)\right)$, then $z_{n}(x)<\frac{1}{2}\frac{n}{k}$ for sufficiently large $n$ when $x>( \log k )^{-\delta}$ because $\sigma_m( \log k )^{-\delta} \to \infty $. By taking $t=z_n(x)$ and $\nu=1/2-\epsilon$ we obtain that for any $\epsilon^*<\epsilon$, as $n \to \infty$,
\begin{equation} \label{eq:empirical process with zn}
k^{\epsilon^*} \sup_{x>( \log k )^{-\delta}} z_{n}\left(x\right)^{-1/2+\epsilon} \left|\sqrt{k}\left( \frac{n}{k} \left( 1-F_n(\sigma_m x) \right) -z_{n}(x)\right)  - \frac{\sqrt{n}}{\sqrt{k}} B_{n}\left(z_{n}(x)\frac{k}{n}\right)\right| \overset{P}{\to} 0.
\end{equation}
We intend to substitute the three $z_n(x)$ terms, referred to by order of appearance in  \eqref{eq:empirical process with zn} as the first, second and third $z_n(x)$ term below, by its limit $x^{-1/\gamma}$ for all $x>( \log k )^{-\delta}$ uniformly. 

The first $z_{n}\left(x\right)$ term can be substituted by its limit $x^{-1/\gamma}$ uniformly for all $x>( \log k )^{-\delta}$ by \eqref{replace_rho<0}. Hence, we have as $n \to \infty$,
\begin{align*}
k^{\epsilon^*} \sup_{x>( \log k )^{-\delta}}x^{\frac{1/2-\epsilon}{\gamma}} \left|\sqrt{k} \left( \frac{n}{k} \left(1 -F_n\left(\sigma_m x\right) \right) - z_{n}(x)\right) -  \frac{\sqrt{n}}{\sqrt{k}}B_{n} \left(z_{n}(x) \frac{k}{n}\right)  \right|\overset{P}{\to} 0.
\end{align*}
Next, to handle the second $z_{n}(x)$ term, we need to show that as $n\to\infty$,
\begin{align*}
\lim_{n\to\infty}k^{\epsilon^*} \sup_{x> \left( \log k \right)^{-\delta}} x^{ \frac{1/2-\epsilon}{\gamma}}\sqrt{k}\left|z_{n}(x)-x^{-1/\gamma}\right|=0.
\end{align*}
This relation follows directly from \eqref{replace_all} and Condition \ref{n,k,m_condition}:  as $n \to \infty$,
\begin{align*}
 k^{\epsilon^*} \sup_{x> \left( \log k \right)^{-\delta}} x^{ \frac{1/2-\epsilon}{\gamma}}\sqrt{k}\left|z_{n}(x)-x^{-1/\gamma}\right|= k^{1/2+\epsilon^*}  a(\sigma_m)(\log k)^{c_1}O(1)\to 0 .
 \end{align*}
Here the last step follows by choosing $0<\epsilon^*<-\rho-l(1/2-\rho)$. The existence of such a positive constant $\epsilon^*$ is guaranteed by Condition \ref{n,k,m_condition}, while the limit relation follows from the fact that $\abs{a(\sigma_m)}<\left(\frac{n}{k}\right)^{\rho+\epsilon^*}$ for sufficiently large $n$, see Remark \ref{remark_condition3.3}.

Lastly, we handle the third $z_n(x)$ term. Write $B_n(u)=\tilde W_n(u)-u\tilde W_n(1)$, where $\tilde W_n$ is a properly defined Brownian motion. Further define $W_n(u)=\sqrt{\frac{n}{k}}\tilde W_n\left(\frac{k}{n}u\right)$. Then $W_n(u)$ is also a Brownian motion and $\sqrt{\frac{n}{k}}B_{n} \left(z_{n}(x) \frac{k}{n}\right) =W_n\left(z_n(x) \right) -\sqrt{\frac{k}{n}}z_n(x) \tilde{W}_n(1)$. We shall prove that, as $n\to\infty$,
\begin{align}
&k^{\epsilon^*} \sup_{x> \left( \log k \right)^{-\delta}} x^{\frac{1/2-\epsilon}{\gamma}}\left|W_{n}(z_{n}(x))- W_n\left(x^{-1/\gamma}\right)\right|=o_p(1),\label{brownian_replacement}\\
&k^{\epsilon^*} \sup_{x> \left( \log k \right)^{-\delta}} x^{\frac{1/2-\epsilon}{\gamma}}\abs{z_n(x)\sqrt{\frac{k}{n}} \tilde W_n(1)}=o_p(1).
\label{brownian_replacement2}
\end{align}
We prove Equation \eqref{brownian_replacement} by splitting the region $\{ x>\left( \log k \right)^{-\delta} \}$ into two regions $\{ \left( \log k \right)^{-\delta}<x<x_0 \}$ and $\{ x\geq x_0 \}$ for a fixed constant $x_0$.

The region $\{x\geq x_0\}$ can be dealt with by the Levy's modulus of continuity for Brownian motion in the following form (see Theorem 1.1.1 and its colloraries in \cite{csorgo2014strong}):
\begin{align*}
\lim_{h \to 0} \sup_{0 \leq s \leq b }\frac{\left|W(s+h)-W(s)\right|}{\sqrt{2 h \log(1/h)}}\overset{a.s.}{=} 1,
\end{align*}
where $W(\cdot)$ is a generic Brownian Motion and any fixed $b<\infty$.

By \eqref{replace_all}, we get uniformly for all $x\geq x_0$ that $\left|z_{n}(x) -x^{-1/\gamma}\right|\to 0$ as $n\to\infty$. Consequently, for sufficiently large $n$, almost surely
\begin{align*}
\sup_{x \geq x_0} x^{\frac{1/2-\epsilon}{\gamma}}  \left| W(z_n(x))-W(x^{-1/\gamma}) \right| \leq& 2 \sup_{x \geq x_0} x^{\frac{1/2-\epsilon}{\gamma}}  \left| z_n(x) -x^{-1/\gamma}\right| ^{1/2-\epsilon}.
\end{align*}
By Condition \ref{n,k,m_condition} and \eqref{replace_rho<0}, we get that as $ n\to\infty$,
\begin{align*}
k^{\epsilon^*} \sup_{x\geq x_{0}}x^{\frac{1/2-\epsilon}{\gamma}} \left|{ W}({ z_n}({ x}))- W\left(x^{-1/\gamma}\right)\right|=k^{\epsilon^*} a( \sigma_m)^{1/2-\epsilon} (\log k)^{c_3(1/2-\epsilon)} O(1)   \to 0 \text{ a.s. } 
\end{align*}
by choosing $\epsilon^{*}<-\rho/l(1/2-\epsilon) +\rho(1/2-\epsilon)$. Therefore, to prove \eqref{brownian_replacement} it is left to handle the region $\{ \left( \log k \right)^{-\delta}<x<x_0 \}$, i.e. as $n \to \infty$,
\begin{align}\label{brownian_replacement_part2}
k^{\epsilon^*}  \sup_{(\log  k)^{-\delta} <x \leq x_0} x^{\frac{1/2-\epsilon}{\gamma}} \left|W_{n}(z_{n}(x)) - W_n\left(x^{-1/\gamma}\right) \right|\overset{P}{\to} 0.
\end{align}
Define $V_{n}(t) =tW_{n}(1/t)$. Then $V_n(t)$ is also a Brownian Motion. We bound the left hand side of Equation \eqref{brownian_replacement_part2} as follows:
\begin{align*}
&k^{\epsilon^*}x^{\frac{1/2-\epsilon}{\gamma}}\left|W_{n}(z_{n}(x))- W_n\left(x^{-1/\gamma}\right)\right| \\
=&k^{\epsilon^*}x^{\frac{1/2-\epsilon}{\gamma}} \left| \frac{z_{n}(x)}{x^{-1/\gamma}} x^{-1/\gamma} V_n\left(\frac{1}{z_{n}(x)}\right) -x^{-1/\gamma} V_n \left(x^{1/\gamma}\right) \right| \\
\leq &k^{\epsilon^*}x^{\frac{-1/2-\epsilon}{\gamma}} \left(\left|V_{n}\left(\frac{1}{z_{n}(x)}\right) \left(\frac{z_{n}(x)}{x^{-1/\gamma}}-1\right)\right|+\left|V_{n}\left(\frac{1}{z_{n}(x)}\right)-V_{n} (x^{1/\gamma})\right|\right) \\
=:&\mathbb{I}_{1}(x)+\mathbb{I}_{2}(x).
\end{align*}

To handle $\mathbb{I}_{1}(x)$, write
\begin{align*}
 \mathbb{I}_{1}(x)&=z_n(x)^{1/2-\epsilon}\abs{V_n\suit{\frac{1}{z_n(x)}}}\cdot 
 \suit{\frac{x^{-1/\gamma}}{z_n(x)}}^{1/2-\epsilon}\cdot k^{\epsilon^*}x^{-2\epsilon/\gamma}\abs{\frac{z_{n}(x)}{x^{-1/\gamma}}-1}\\
 &=:\mathbb{I}_{11}(x)\cdot \mathbb{I}_{12}(x)\cdot\mathbb{I}_{13}(x).
\end{align*}
For $(\log k)^{-\delta}<x<x_0$, \eqref{replace_rho<0} implies that for sufficiently large $n$, 
$\frac{1}{z_n(x)}<2x_0^{1/\gamma}$ uniformly and as $n\to\infty$, $\mathbb{I}_{12}(x)=O(1)$. As $n\to\infty$, since
$\sup_{s<2x_0^{1/\gamma}} s^{-1/2+\epsilon}\abs{V_n(s)}=O_p(1)$ , we conclude that $\mathbb{I}_{11}(x)=O_p(1)$ uniformly for all $(\log k)^{-\delta}<x<x_0$. Finally, for $\mathbb{I}_{13}(x)$, \eqref{replace_rho<0} implies that as $n\to\infty$, 
\begin{align*}
\sup_{(\log k)^{-\delta}<x<x_0} \mathbb{I}_{13}(x)\leq& k^{\epsilon^*}\sup_{(\log k)^{-\delta}<x<x_0} x^{-2\epsilon/\gamma}  \sup_{(\log k)^{-\delta}<x<x_0} \abs{\frac{z_{n}(x)}{x^{-1/\gamma}}-1} \\
=& k^{\epsilon^*} a(\sigma_m)(\log k)^{c_3+2\delta\epsilon/\gamma}O(1)=o(1),
\end{align*}
 where the $o(1)$ term in the last equality follows by Condition \ref{n,k,m_condition} and choosing $\epsilon^*<-\rho/l+\rho$. Combining the three components, we conclude that as $n\to\infty$,
$$\sup_{(\log k)^{-\delta}<x<x_0} \mathbb{I}_{1}(x)=o_p(1).$$

We handle $\mathbb{I}_{2}(x)$ by applying the modulus of continuity to the Brownian Motion $V_n(\cdot)$. From \eqref{replace_rho<0}, we get that as $n\to\infty$,
\begin{align*}
\sup_{\left( \log k \right)^{-\delta} <x\leq x_{0}}\left|\frac{1}{z_{n}(x)}-\frac{1}{x^{-1/\gamma}}\right| =\sup_{\left( \log k \right)^{-\delta}<x\leq x_{0}}\frac{1}{z_{n}(x)}\left|\frac{z_{n}(x)}{x^{-1/\gamma}}-1\right|=O(1)o(1)=o(1),
\end{align*}
Thus, we can apply the modulus of continuity as intended. The rest of the proof is similar to that for the region $\{ x>x_0 \}$. By combining the two components $\mathbb{I}_1(x)$ and $\mathbb{I}_2(x)$, we conclude that \eqref{brownian_replacement_part2}, and thereby \eqref{brownian_replacement}, is proved.

Finally, by choosing $\epsilon^*<1/4$, \eqref{brownian_replacement2} is proved by combining Condition \ref{n,k,m_condition} with the following facts: as $n\to\infty$, $k^{3/2}/\sqrt{n}\to 0$, $\tilde{W}_{n}(1) = O_{p}(1)$, ${ z_n}({ x})/x^{-1/\gamma} = O(1)$ and $\sup_{ x>\left( \log k \right)^{-\delta}} x ^{\frac{- 1/2-\varepsilon}{\gamma} }=O\suit{(\log(k))^{\delta(1/2+\varepsilon)/\gamma} }$.

\end{proof}
\subsection{Proof of Proposition \ref{empirical_distribution}}
\begin{proof}[Proof of Proposition \ref{empirical_distribution}]
 We shall prove the proposition in three steps: as  $n \to \infty$,
\begin{align}
&\sup_{x> \left( \log k \right)^{-\delta}} k^{\tilde{\epsilon}} x^{\frac{1/2-\epsilon}{\gamma}}\sqrt{k}\left(\mathbb{P}_{n}(x)-\mathbb{P}_{n}^{1}(x)\right)=o_{p}(1), \label{first_step} \\
&\sup_{x> \left( \log k \right)^{-\delta}} k^{\tilde{\epsilon}} x^{\frac{1/2-\epsilon}{\gamma}}\sqrt{k}\left(\mathbb{P}_{n}^{1}(x)-\mathbb{P}_{n}^{2}(x)\right)=o_{p}(1), \label{second_step} \\
&\sup_{x> \left( \log k \right)^{-\delta}}k^{\tilde{\epsilon}}x^{\frac{1/2-\epsilon}{\gamma}} \left| \sqrt{k} \left( \mathbb{P}_n^2 (x) -\left(1-e^{-x^{-1/\gamma}} \right) \right) -e^{-x^{-1/\gamma}}W_n(x^{-1/\gamma})  \right|=o_p(1), \label{third_step}
\end{align}
where
\begin{align*}
 &\mathbb{P}_{n}^{(1)}(x)=\sum_{i=1}^{n-m+1}\frac{1}{k}e^{-(i-1/k)}\mathbb{I}\{\frac{X^c_{n-i+1:n} }{\sigma_m} >x\}, \\
 &\mathbb{P}_{n}^{(2)}(x)=\sum_{i=1}^{n}\frac{1}{k}e^{-(i-1/k)}\mathbb{I}\{\frac{X^c_{n-i+1:n} }{\sigma_m} >x\}.
\end{align*}
Compared to $\mathbb{P}_{n}$, $\mathbb{P}_{n}^{(1)}$ uses the weights $q_i$ instead of $p_i$ and $\mathbb{P}_{n}^{(2)}$ further includes the lowest $m-1$ order statistics.

Denote $\bar{F}_{n}(x) =  \frac{1}{n}\sum_{i=1}^{n}\mathbb{I}\{X_{i} > x\}$. Note that for sufficiently large $n$ we have $\sigma_m x>c$ for all $x>\left(  \log k \right) ^{-\delta}$, which implies $\{X^c_{n-i+1:n} >\sigma_m x\}=\{ \bar{F}_n(\sigma_m x) \geq  \frac{i}{n}\}$ holds for all $1\leq i \leq n-m+1$ and  $x>\left(  \log k \right) ^{-\delta}$. Consequently, we get that for sufficiently large $n$,
\begin{align*}
&k^{\tilde{\epsilon}}x^{\frac{1/2-\epsilon}{\gamma}}\sqrt{k}\ \left| \mathbb{P}_{n}(x)- \mathbb{P}_{n}^{(1)}(x) \right| \\
\leq &k^{\tilde{\epsilon}}x^{\frac{1/2-\epsilon}{\gamma}}\sqrt{k}\sum_{i=1}^{n-m+1} \left| p_{i} - \frac{1}{k}e^{-(i-1)/k}\right|\ \mathbb{I}\{X^c_{n-i+1:n} >\sigma_m x\} \\
=&k^{\tilde{\epsilon}}x^{\frac{1/2-\epsilon}{\gamma}}\sqrt{k} \sum^{ n\bar{F}_n(x \sigma_m)}_{ i=1} \frac{1}{k}e^{-(i-1)/k} \left|\frac{p_{i}}{q_{i}}- 1\right|.
\end{align*}

 We intend to apply Lemma \ref{weights} to bound the terms $\left|\frac{p_{i}}{q_{i}}- 1\right|$ for $i \leq n \bar{F}_n(x \sigma_m) $. Notice that Lemma \ref{extended_process} implies that, for any $\eta>0$ and sufficiently large $n$, on a set $A_n$ with $\mathbb{P}(A_n)>1-\eta$ we have 
\begin{align*}
n \bar{F}_n(\sigma_m (\log k)^{-\delta}) \leq& k^{1/2-\epsilon^*}( \log k )^{\delta \left(\frac{1/2-\epsilon}{\gamma} \right)} +\sqrt{k}\left| W_n\left((\log k)^{\frac{\delta}{\gamma}}\right) \right| +k (\log k)^{\delta / \gamma}.\\
\leq & k (\log k)^d,
\end{align*}
where $d> \delta /\gamma$.
Hence by Lemma \ref{weights} we obtain that for sufficiently large $n$, on the set $A_n$, there exists a constant $C$ such that
\begin{align*}
&k^{\tilde{\epsilon}}x^{ \frac{1/2-\epsilon}{\gamma}}\sqrt{k}\left| \mathbb{P}_{n}(x)-\mathbb{P}_{n}^{1}(x)\right|\\
\leq&C\left( (\log k)^{4d}\left(\frac{1}{k}+\frac{1}{m}\right)\right)k^{\tilde{\epsilon}}x^{\frac{1/2-\epsilon}{\gamma}}\sqrt{k} \sum^{ n\bar{F}_n(x \sigma_m)}_{ i=1} \frac{1}{k}e^{-(i-1)/k}\\
=&C\left( (\log k)^{4d}\left(\frac{1}{k}+\frac{1}{m}\right)\right)k^{\tilde{\epsilon}}x^{\frac{1/2-\epsilon}{\gamma}}\sqrt{k} \left( 1-e^{-\frac{n}{k}\bar{F}_n\left(\sigma_m x \right)  } \right),
\end{align*}
uniformly for $x > \left( \log k \right)^{-\delta}$. By the taylor expansion of the function $\phi(y)=1-e^{-y}$ at the point $x^{-1/\gamma}$, we obtain that, there exists $\tilde{x}_{n}$ between $x^{-1/\gamma}$ and $\frac{n}{k} \bar{F}_n(\sigma_m x)$ such that
\begin{align*}
 x^{\frac{1/2-\epsilon}{\gamma}} \left(1-e^{-\frac{n}{k} \bar{F}_n(\sigma_m x) }\right) =  x^{\frac{1/2-\epsilon}{\gamma}}(1 -e^{-x^{-1/\gamma}}) + x^{\frac{1/2-\epsilon}{\gamma}}e^{-\tilde{x}_{n}} \left( \frac{n}{k}\bar{F}_{n} (\sigma_m x) -x^{-1/\gamma}\right).
\end{align*}
Note that  $\sup_{x>0}x^{\frac{1/2-\epsilon}{\gamma}}\left(1 -e^{-x^{-1/\gamma}}\right) $ is finite. Together with the fact that $0<e^{-\tilde{x_n}}<1$ and Lemma \ref{extended_process}, we conclude that $\sup_{x> (\log k)^{-\delta}} x^{\frac{1/2-\epsilon}{\gamma}} \left(1-e^{-\frac{n}{k} \bar{F}_n(\sigma_m x) }\right)=O_p(1)$. By Condition \ref{n,k,m_condition}, $ k = O(n^{l})$ with $l<2/3$. Therefore we can take $\tilde{\epsilon}<1/l-3/2$ small enough and have that $k^{1/2+\tilde{\epsilon}} (\log k)^{4d} /m\to 0$. With this $\tilde{\epsilon}$ we conclude that \eqref{first_step} is proved.  

To prove \eqref{second_step}, note that $\{\left| \mathbb{P}_{n}^{1}(x)-\mathbb{P}_{n}^{2}(x)\right| \neq 0 \} \subset \{ X^c_{m:n}>\sigma_m x\} $. Moreover, for any constant $0 < a< 1$ we have eventually $X_{m:n} \leq X_{na:n}$ while $X_{na:n}\overset{P}{\to} F^{-1}(a) <\infty$ and $\left( \log k \right)^{-\delta}\sigma_m \to\infty$, as $n \to \infty$. Therefore $\mathrm{P}\left( X_{m:n}^c>\sigma_m x \right)\to 0$ as $n \to \infty$, which proves \eqref{second_step}.

To prove \eqref{third_step}, recall the equivalence between the events $\{X^c_{n-i+1:n} >\sigma_m x\}$ and $\{ \bar{F}_n(\sigma_m x) \geq  \frac{i}{n}\}$ for large enough $n$ and $\phi(y)=1-e^{-y}$. Write
\begin{align*}
 \mathbb{P}_{n}^{(2)}(x)=&\sum_{i=1}^{n}\frac{1}{k}e^{-(i-1)/k}\mathbb{I}\{\frac{X^c_{n-i+1:n} }{\sigma_m} >x\} \\
 =&\sum_{i=1}^{n}\frac{1}{k} { e}^{-({ i}-1)/k}\mathbb{I}\{\bar{F}_n(\sigma_m x) \geq  \frac{i}{n}\} \\
=&\sum^{n\bar{F}_n(\sigma_m x)}_{i=1}  \frac{1}{k}e^{-(i-1)/k} = \frac{1-e^{-\frac{n}{k}\bar{F}_{n}(\sigma_{m}x)}}{k(1-e^{-1/k})}=\left(1+\frac{1}{k}o(1) \right) \phi\left( \frac{n}{k}\bar{F}_{n}(\sigma_{m}x) \right).
\end{align*}
Since
\begin{align*}
\phi\left(\frac{n}{k} \bar{F}_n\left(\sigma_m x\right)\right) -\phi\left(x^{-1/\gamma}\right) =e^{-x^{-1/\gamma}} \left( \frac{n}{k}\bar{F}_n(\sigma_m x) - x^{-1/\gamma}\right) -  \frac{e^{-\tilde{x}_{n}}}{2} \left( \frac{n}{k}\bar{F}_{n} (\sigma_m x) -x^{-1/\gamma}\right)^2,
\end{align*}
where $\tilde{x}_{n}$ is in between $x^{-1/\gamma}$ and  $\frac{n}{k} \bar{F}_n(\sigma_m x)$, by Lemma \ref{extended_process} we obtain that as $n \to \infty$,
\begin{align*}
\sup_{x\geq \left( \log k \right)^{-\delta}} x^{\frac{1/2-\epsilon}{\gamma}} \sqrt{k}\left( \frac{n}{k} \bar{F}_n(\sigma_m x) -x^{-1/\gamma}\right) =O_{p}(1) .
\end{align*}
By choosing  $\tilde{\epsilon}<1/2$ we get that as $n \to \infty$,
\begin{align*}
{ k}^{\tilde{\epsilon}}\sup_{x\geq \left( \log k \right)^{-\delta}} x^{\frac{1/2-\epsilon}{\gamma}} \sqrt{k} \left( \frac{n}{k}\bar{F}_{n} (\sigma_m x) -x^{-1/\gamma}\right)^2 = o_{p}(1).
\end{align*}
Combining the aforementioned limit relations with the fact that $0<e^{-\tilde{x}_n}<1$, we get that as $n \to \infty$, uniformly for all $x> (\log k)^{-\delta}$, 
\begin{align}
\phi\left( \frac{n}{k}\bar{F}_{n} (\sigma_m x)\right) = \phi\left(x^{-1/\gamma}\right)+&e^{-x^{-1/\gamma}}\left(\frac{n}{k}\bar{F}_{n} \left( \sigma_m x)\right)-x^{-1/\gamma}\right)+k^{-1/2-\tilde{\epsilon}}x^{\frac{-1/2+\epsilon}{\gamma}}o_p(1) \nonumber \\
= \phi\left(x^{-1/\gamma}\right)+&e^{-x^{-1/\gamma}} \left(k^{-1/2} W_n\left(x^{-1/\gamma}\right)+k^{-1/2-\epsilon^{*}}x^{\frac{-1/2+\epsilon}{\gamma}}o_p(1)    \right) \nonumber \\
+&k^{-1/2-\tilde{\epsilon}}x^{\frac{-1/2+\epsilon}{\gamma}}o_p(1) \nonumber \\
= \phi\left(x^{-1/\gamma}\right)+&k^{-1/2}e^{-x^{-1/\gamma}}W_n\left(x^{-1/\gamma}\right)+k^{-1/2-\tilde{\epsilon}}x^{\frac{-1/2+\epsilon}{\gamma}}o_p(1), \label{phi_expansion1}
\end{align}
where in the last step we choose $\tilde{\epsilon}<\epsilon^*$.
 
Since $\sup_{x> (\log k)^{-\delta}} x^{\frac{1/2-\epsilon}{\gamma}}e^{-x^{-1/\gamma}}W_n(x^{-1/\gamma})=O_p(1) $, as $n \to \infty$, and $\sup_{x >( \log k)^{-\delta} } x^{\frac{1/2-\epsilon}{\gamma}} \phi(x^{-1/\gamma})$ is finite, the relation \eqref{phi_expansion1} further implies that
\begin{align}
\phi\left( \frac{n}{k}\bar{F}_{n} (\sigma_m x)\right) =x^{\frac{-1/2+\epsilon}{\gamma}}O(1) +k^{-1/2}x^{\frac{-1/2+\epsilon}{\gamma}}O_p(1)=x^{\frac{-1/2+\epsilon}{\gamma}}O_p(1),  \label{phi_expansion2}
\end{align}
uniformly for $x> (\log k)^{-\delta}$. Therefore, as $n \to \infty$ uniformly for $x> \left( \log  k  \right)^{-\delta}$, 
\begin{align*}
\mathbb{P}_{n}^{(2)}(x) &= \phi\left(\frac{n}{k}\bar{F}_n(\sigma_m x) \right)\left(1+\frac{1}{k}o(1)\right)\\
&=\phi\left(\frac{n}{k}\bar{F}_n(\sigma_m x) \right)+\frac{1}{k}\phi\left(\frac{n}{k}\bar{F}_n(\sigma_m x) \right)o(1)\\
&=\phi\left(x^{-1/\gamma} \right)+k^{-1/2}e^{-x^{-1/\gamma}}W_n\left(x^{-1/\gamma}\right)+k^{-1/2-\tilde{\epsilon}}x^{\frac{-1/2+\epsilon}{\gamma}}o_p(1)+\frac{1}{k}x^{\frac{-1/2+\epsilon}{\gamma}}O_p(1).
\end{align*}
Here in the last step we replace $\phi\left(\frac{n}{k}\bar{F}_n(\sigma_m x) \right)$ in the two consecutive terms by the expressions in \eqref{phi_expansion1} and \eqref{phi_expansion2} respectively. Therefore we have completed the proof of \eqref{third_step}.
\end{proof}

\subsection{Proof of Theorem \ref{main_theorem}}
\begin{proof}[Proof of Theorem \ref{main_theorem}] The theorem follows by applying Theorem 2.5 in \cite{bucher2018maximum} to the truncated sample of all block maxima
$\underbrace{X^c_{n:n}}_{\binom{n-1}{m-1}}, \cdots, \underbrace{X^c_{n-i+1:n} }_{\binom{n-i}{m-1}}, \cdots,  \underbrace{X^c_{m:n}}_{1}$. To apply the theorem we need to verify its two sets of conditions.\\
\\
The first set of conditions requires that there exists $\gamma_-$ and $\gamma_+$ satisfying $0 <\gamma_- <\gamma_0 <\gamma_+ < \infty$ such that for all $f \in \mathbb{F}(\gamma_-, \gamma_+) := \{x \to \log(x)\}\cup\{x\to x^{-1/\gamma} : \gamma\in (\gamma_-, \gamma_+)\}\cup \{x\ \to\ x^{-1/\gamma} \log(x)\ :\ \gamma\in\ (\gamma_-,\ \gamma_+)\}\cup\{x \to x^{-1/\gamma} \log^{2}(x) : \gamma\in(\gamma_-, \gamma_+)\}$
we have as $n \to \infty$
\begin{align*}
 \sum_{i=1}^{n-m+1}p_{i}f \left(\frac{X^c_{n-i+1:n} }{\sigma_m}\right)\overset{P}{\to}  \int_{0}^{\infty}f(x) d \left( e^{-x^{-1/\gamma_0}} \right),
\end{align*}
For the second set of conditions we will show that, as $n \to \infty$,
\begin{align*}
 \sqrt{k} & \left(   \sum_{i=1}^{n-m+1}p_{i}f_{1}\left(\frac{X^c_{n-i+1:n} }{\sigma_m}\right)- \int_{0}^{\infty} f_{1}(x) d \left( e^{-x^{-1/\gamma_0}} \right), \right. \\
& \left. \sum_{i=1}^{n-m+1}p_{i}f_{2}\left(\frac{X^c_{n-i+1:n} }{\sigma_m}\right) - \int_{0}^{\infty} f_{2}(x)d \left( e^{-x^{-1/\gamma_0}} \right), \right. \\
 & \left. \sum_{i=1}^{n-m+1}p_{i}f_{3}\left(\frac{X^c_{n-i+1:n} }{\sigma_m}\right) - \int_{0}^{\infty} f_{3}(x) d \left( e^{-x^{-1/\gamma_0}} \right) \right) \\
 &\overset{P}{\to} \mathbf{Y}=: \left(Y_{1},\ Y_{2},\ Y_{3}\right)^{T} ,
\end{align*}
for $f_1=x^{-1/\gamma_0}\log(x)$, $f_2=x^{-1/\gamma_0}$ and $f_3=\log(x)$ with
\begin{align*}
&Y_{1}= \int_{0}^{\infty} f'_1(x) e^{-x^{-1/\gamma_0}} { W_n}(x^{-1/\gamma_0} ) dx,\\
&Y_{2}= \int_{0}^{\infty}f'_2(x) e^{-x^{-1/\gamma_0}}  W_n\left(x^{-1/\gamma_0}\right){ dx},\\
&Y_3= \int_{0}^{\infty}f'_3(x) e^{-x^{-1/\gamma_0}} { W_n}(x^{-1/\gamma_0} ) dx.
\end{align*}
The two sets of conditions are checked by applying Proposition \ref{empirical_distribution}, where checking the first set is similar to checking the second set, but simpler. Therefore we only show that the second set of conditions holds. 

Starting with $f_{1}(x)= x^{-1/\gamma_0} \log(x)$, we write $\gamma$ in replacement of $\gamma_0$ and show that the following relations holds: for some $\delta>0$, as $n \to \infty$,
\begin{align}
&\sqrt{k}\sum_{i=1}^{k (\log k )^2}p_{i}  f_1\left( \frac{X^c_{n-i+1:n} }{\sigma_m}\right) \mathbb{I}\{\frac{X^c_{n-i+1:n} }{\sigma_m} \leq\left( \log k \right)^{-\delta}\} =o_{p}(1), \label{first_condition} \\
&\sqrt{k}\sum_{i=k (\log k )^2}^{n-m+1}p_{i} f_1\left(\frac{X^c_{n-i+1:n} }{\sigma_m}\right)\mathbb{I}\{\frac{X^c_{n-i+1:n} }{\sigma_m} \leq\left( \log k \right)^{-\delta}\} =o_{p}(1), \label{second_condition}  \\
&\sqrt{k} \left( \sum^{n-m+1}_{i=1} p_{i} f_1\left(\frac{X^c_{n-i+1:n} }{\sigma_m}\right) \mathbb{I}\{ \frac{X^c_{n-i+1:n} }{\sigma_m} > \left( \log k \right)^{-\delta}\}  
 -\int_{0}^{\infty}\ f_1(x) d \left( e^{-x^{-1/\gamma}} \right) \right)   \nonumber    \\
=&\int_{0}^{\infty} f_1'(x) e^{-x^{-1/\gamma}} W_n\left(x^{-1/\gamma}\right){ dx} + o_{p}(1). \label{third_condition}
\end{align}
Note that relation \eqref{first_condition} holds by choosing $\delta>2\gamma$: for $i \leq k \left(\log k\right) ^2$,  $\mathbb{I}\{\frac{X_{n-i+1}\vee c}{\sigma_m} \leq\left( \log k \right)^{-\delta}\} =o_{p}(1)$ uniformly as $n \to \infty$ by Condition \ref{n,k,m_condition}.

 To show relation \eqref{second_condition}, we write for any $\epsilon>0$, as $n \to \infty$,
\begin{align*}
& \sqrt{k}\sum_{i=k \left(\log k\right) ^2 }^{n-m+1}p_{i}f_1\left(\frac{X^c_{n-i+1:n} }{\sigma_m}\right)\mathbb{I}\{\frac{X^c_{n-i+1:n} }{\sigma_m} \leq\left( \log k \right)^{-\delta}\} \\
=&O \left(1 \right) \sum_{i=k \left(\log k\right) ^2}^{n-m+1}p_{i} \left(\frac{c}{\sigma_m}\right) ^{-1/\gamma} \log\left( \frac{c}{\sigma_m}  \right)\\
=&O \left(1 \right) m^{1+\epsilon} \sqrt{k} \log(m)\sum_{i=k \left(\log k\right) ^2}^{n-m+1}p_{i}\\
=& O \left(1 \right)m^{1+\epsilon} \sqrt{k} \log(m) \binom{n-k \left(\log k\right) ^2+1}{m}/\binom{n}{m}\\
=&O \left(1 \right) m^{1+\epsilon}\sqrt{k} \log(m) \exp\left(\sum_{i=1}^{m-2}\log\left(1-\frac{\left(\log k\right) ^2}{m}+\frac{1}{n}-\frac{i}{n}\right)-\log\left(1-\frac{1}{n}-\frac{i}{n}\right)\right)\\
=&O \left(1 \right) m^{1+\epsilon} \sqrt{k} \log(m)  k^{- \log k} =o(1),
\end{align*}
where in the second equality we use that $\sigma_m^{1/\gamma}<m^{1+\epsilon}$ (see Remark \ref{remark_condition3.3}), in the second last equality we use the taylor expansion of $\log(1-x)$ and in the last step we use the condition that $k>n^{l'}$ for some $l'>0$ as $n$ is large enough, see Condition \ref{n,k,m_condition}.

To prove relation \eqref{third_condition} we write
\begin{align*}
 &\sum^{n-m+1}_{i=1} p_{i} f_1\left(\frac{X^c_{n-i+1:n} }{\sigma_m}\right)\mathbb{I}\{ \frac{X^c_{n-i+1:n} }{\sigma_m} > \left( \log k \right)^{-\delta}\} \\
=& \sum^{n-m+1}_{i=1} p_{i} \left( \int_{\left( \log k \right)^{-\delta}}^{\infty} f_1'(x) \mathbb{I}\{ \frac{X^c_{n-i+1:n} }{\sigma_m} > x\} dx \right. \\
& \left. \hspace{20mm}+ \log(\left( \log k \right)^{-\delta})\left( \log{ k}\right)^{ \delta /\gamma} \mathbb{I}\{\frac{X^c_{n-i+1:n} }{\sigma_m} >\left( \log k \right)^{-\delta}\} \right) \\
=:& \mathbb{I}_{1}+\mathbb{I}_{2}.
\end{align*}
By applying Proposition \ref{empirical_distribution} with $\delta > 2\gamma$ we obtain for ${\mathbb{I}}_{1}$ that, as $n \to \infty$,
\begin{align*}
 \sqrt{k}\left({\mathbb{I}}_{1}-\int_{\left( \log k \right)^{-\delta}}^{\infty} f_1'(x) \left(1-e^{-x^{-1/\gamma}}\right){ dx}\right) 
 =\int_{(\log k)^{-\delta}}^{\infty} f_1'(x) e^{-x^{-1/\gamma}}  W_n\left(x^{-1/\gamma}\right){ dx} +o_{p}(1).
\end{align*}
Note that here the $o_{p}(1)$ term arises with the aid of the rate $k^{\tilde{\epsilon}}$ and the weight function $x^{\frac{1/2-\epsilon}{\gamma}}$ in Proposition \ref{empirical_distribution}.

For ${\mathbb{I}}_{2}$, by applying Proposition \ref{empirical_distribution} evaluated at $x=\left( \log k \right)^{-\delta}$, we obtain
\begin{align*}
\sqrt{k} \left({\mathbb{I}}_{2}-\left(1-\exp\left(\left( \log k \right)^{\delta/\gamma}\right)\right)\log\left(\left( \log k \right)^{-\delta}\right) \log\left({ k}\right)^{ \delta/\gamma} \right) =o_{p}(1).
\end{align*}
To combine the results for ${\mathbb{I}}_{1}$ and ${\mathbb{I}}_{2}$ presented above, we have by partial integration that
\begin{align*}
& \int_{\left( \log k \right)^{-\delta}}^{\infty} f_1'(x)(1-e^{-x^{-1/\gamma}}){ dx} \\
&=-\left(1-\exp\left(\left( \log k \right)^{\delta/\gamma}\right)\right)\log\left(\left( \log k \right)^{-\delta}\right) \log\left({ k}\right)^{ \delta/\gamma}+ 
 \int_{\left( \log k \right)^{-\delta}}^{\infty}f_1(x) d \left(e^{-x^{-1/\gamma}}\right).
\end{align*}
Therefore, as $n \to \infty$,
\begin{align*}
\sqrt{k} \left( {\mathbb{I}}_{1}+{\mathbb{I}}_{2}-\int_{\left( \log k \right)^{-\delta}}^{\infty} f_1(x) d \left(e^{-x^{-1/\gamma}}\right) \right) =
 \int_{ (\log k )^{-\delta}}^{\infty} f_1'(x) e^{-x^{-1/\gamma}} W_n\left(x^{-1/\gamma}\right){ dx} +o_{p}(1).
\end{align*}
Note that one can extend the domain of $W_n(t)$ to $0<t<\infty$ by the independent increment property of Brownian Motion. Therefore, to get the result in \eqref{third_condition} it is only left to prove that, as $n \to \infty$,
\begin{align}
 &\sqrt{k}\int_{0}^{\left( \log k \right)^{-\delta}}f_1(x) d \left(e^{-x^{-1/\gamma}}\right) =o(1), \label{result_here} \\
&\int_{ 0}^{(\log k )^{-\delta}} f_1'(x) e^{-x^{-1/\gamma}} W_n\left(x^{-1/\gamma}\right){ dx}=o_p(1). \label{result_appendix}
\end{align}
For \eqref{result_here}, note that for $\delta>2\gamma$, we have that as $n \to \infty$,
\begin{align*}
&\left| \sqrt{k}\int_{0}^{\left( \log k \right)^{-\delta}} f_1(x) d \left(e^{-x^{-1/\gamma}}\right) \right| =\left| - \gamma \sqrt{k}\int_{\left( \log k \right)^{\frac{\delta}{\gamma}}}^{\infty}u \log(u) e^{-u} d u \right| \\
\leq &  \gamma \sqrt{k}\int_{\left( \log k \right)^{2}}^{\infty} e^{-u/2} d u=2 \gamma \sqrt{k} e^{-(\log k)^2 /2} \to 0.
\end{align*}
For \eqref{result_appendix}, omitting the subscript $n$ and regarding $W(.)$ as a generic Brownian Motion, it suffices to prove that as $n \to \infty$,
\begin{align*}
\lim_{n \to \infty} \int_{ 0}^{(\log k )^{-\delta}} f_1'(x) e^{-x^{-1/\gamma}} W\left(x^{-1/\gamma}\right){ dx}=0 \text{ a.s}.
\end{align*}
Note that by the law of the iterated logarithm we have that, as $x \to 0$,
\begin{align*}
\lim_{x \to 0} \left( x^{-1/\gamma} \right) ^{-1/2-\epsilon} W\left(x^{-1/\gamma}\right) =0 \text{ a.s.}
\end{align*}
Furthermore, note that $f_1'(x)=-\frac{1}{\gamma} x^{-1/\gamma-1} \log x + x^{-1/\gamma-1}$, which implies that
\begin{align*}
\lim_{x \to 0} e^{-x^{-1/\gamma}}f_1'(x) \left( x^{-1/\gamma} \right) ^{1/2+\epsilon}=0.
\end{align*}
Therefore we have that, as $n\to \infty$,
\begin{align*}
&\left| \int_{ 0}^{(\log k )^{-\delta}} f_1'(x) e^{-x^{-1/\gamma}} W\left(x^{-1/\gamma}\right){ dx} \right| \leq  \int_{ 0}^{(\log k )^{-\delta}}\left|e^{-x^{-1/\gamma}}f_1'(x) \left( x^{-1/\gamma} \right) ^{1/2+\epsilon} \right|{ dx} \\
&\leq (\log k )^{-\delta} \to 0 \text{ a.s. },
\end{align*}
which completes the proof of \eqref{result_appendix}. The steps to prove the results for $f_2$ and $f_3$ are similar and the calculation of the covariance matrix of $\mathbf{Y}$ is given in Appendix A.
\end{proof}


\begin{appendix}
\section{Appendix A}\label{appendix_A}
By the proof of Theorem \ref{main_theorem} we obtain that as $n \to \infty$,
\begin{align*}
\left( \frac{1}{\hat{\gamma}_n} -\frac{1}{\gamma},\hspace{1mm} \hat{\sigma}_n/\sigma -1  \right)^{T} \overset{d}{\to} M \left(Y_{1},\ Y_{2},\ Y_{3}\right)^{T},
\end{align*}
where the matrix $M$ is given in Theorem \ref{main_theorem} and recall that 
\begin{align*}
Y_{1}=& \int_{0}^{\infty} {x}^{-1/\gamma-1}\left(1- \frac{\log(x)}{\gamma}\right) e^{-x^{-1/\gamma}} { W_n}(x^{-1/\gamma} ) dx=\gamma \int_{0}^{\infty}( 1+\log u ) e^{-u} { W_n}(u ) du, \\
Y_{2}=& \int_{0}^{\infty}\frac{x^{-1/\gamma-1}}{-\gamma} e^{-x^{-1/\gamma}}  W_n\left(x^{-1/\gamma}\right){ dx}=- \int_{0}^{\infty} e^{-u}  W_n(u)du,\\
Y_3=& \int_{0}^{\infty}\frac{1}{x} e^{-x^{-1/\gamma}} { W_n}(x^{-1/\gamma} ) dx =\gamma \int_{0}^{\infty}u^{-1} e^{-u} { W_n}(u ) du.
\end{align*}
Note that $\mathbf{Y}= \left(Y_{1},\ Y_{2},\ Y_{3}\right)^{T}$ is a Gaussian random vector with mean zero. We characterize its distribution by calculating the covariance matrix. Define $Ei(x)= \int_{-x}^{\infty}\frac{e^{-t}}{t}dt$ and denote $\tau$ as the Euler-Mascheroni constant. As an example, we show the explicit calculation for the variance of $Y_1$.\footnote{The other explicit calculations are available on request.}
\begin{align*}
\text{Var}(Y_1)=&\mathbb{E}\gamma^2  \int_{0}^{\infty}\int_{0}^{\infty}e^{-u}(1+\log(u))e^{-x}(1+\log(x))W_{n}(x)W_{n}(u)dudx\\
=&\gamma^2  \int_{0}^{\infty}\left[ \int_{0}^{x}u(1+\log(u))(1+\log(x))e^{-u}e^{-x}du \right.\\
&+\left. \int_{x}^{\infty}e^{-u}e^{-x}x(1+\log(x))(1+\log(u))du)\right]dx\\
=&\gamma^2  \int_{0}^{\infty}e^{-x}(1+\log(x))(Ei(-x)-xe^{-x}-e^{-x}(x+1)\log(x)-2e^{-x}-\tau+2) { dx}\\
&+\gamma^2  \int_{0}^{\infty}xe^{-x}(1+\log(x))(-Ei(-x)+e^{-x}\log(x)+e^{-x})dx\\
=&\gamma^2 \big( \int_{0}^{\infty}(1-x)e^{-x}(1+\log(x))Ei(-x)dx-\int_{0}^{\infty}e^{-2x}(1+\log(x))\log(x)dx  \\
&-2  \int_{0}^{\infty}e^{-2x}(1+\log(x))dx+(2-\tau)\int_{0}^{\infty}e^{-x}(1+\log(x)dx)\big)\\
=&\gamma^2 (1/2 (\tau+\log (8)-1)- \frac{1}{12} (\pi^2-6 (\tau+\log(2))+6(\tau+\log (2))^{2})  \\
&-(1-\tau-\log(2))+(2-\tau)(1 -\tau))=: \gamma^2 p \approx 0.706 \gamma^2.
\end{align*}
The final calculated covariance matrix of $\mathbf{Y}$ is given by
$$
\text{Cov}(\mathbf{Y})=
\begin{bmatrix}
\gamma^2 p &-&-\\
-\frac{\gamma}{2}(1-\tau +\log(2))&\frac{1}{2}&- \\
\gamma^2 ((3-\tau-\frac{\log(2)}{2})\log(2) -\frac{\pi^2}{12}  )&-\gamma \log(2)&\gamma^2 2 \log(2)
\end{bmatrix}.
$$

We check the calculation for $\text{Cov}(\mathbf{Y})$ numerically. Since $\mathbb{E}\left(W_n(u)W_n(s)\right)=u \wedge s$, note that we can express the covariances with the use of independent standard exponential random variables $U$ and $S$, having joint density $f_{U,S}(u,s)=e^{-(u+s)}$, as follows:
\begin{align*}
\text{Cov}(Y_i,Y_j)=\mathbb{E}\left(g_i(U) g_j(S) \left(U \wedge S\right) \right), \text{ for } i,j=1,2,3,
\end{align*}
where $g_1(u)=\gamma(1+\log u)$, $g_2(u)=-1$ and $g_3(u)=\gamma u^{-1}$. We simulate $100,000$ copies of independent standard exponential random variables $(U_m, S_m)$ each and obtain an estimate for the covariances as
\begin{align*}
 \widehat{\text{Cov}(Y_i,Y_j)}=\frac{1}{100000} \sum_{m=1}^{100000}g_i(U_m) g_j(S_m) \left(U_m \wedge S_m\right) \text{ for } i,j=1,2,3.
\end{align*}
The resulting estimate for the covariance matrix, rounded to two decimal places, is presented below:
\begin{align*}
\widehat{\text{Cov}(\mathbf{Y})}=\begin{bmatrix}
0.7 \gamma^2  &-&-\\
-0.56\gamma&0.5&- \\
0.62 \gamma^2 &-0.69\gamma &1.38\gamma^2 
\end{bmatrix}.
\end{align*}
It agrees with the calculated covariance matrix.

\end{appendix}
\bibliographystyle{plainnat} 
\bibliography{abm}       


\end{document}